\documentclass[smallextended,envcountsect]{my-svjour3} 


\smartqed 

\usepackage{amsmath,amssymb,amsfonts}
\usepackage{graphicx}
\usepackage{subfig}
\usepackage{url}
\usepackage[english]{babel}
\usepackage{calrsfs}
\usepackage{a4wide}
\usepackage{cite}
\usepackage{enumerate}
\usepackage[misc,geometry]{ifsym} 
\usepackage{hyperref}


\journalname{JOTA}


\newcommand{\doi}[1]{{\scriptsize 
\textsc{doi}: \href{http://dx.doi.org/#1}{\nolinkurl{#1}}}}

\newcommand*{\arXiv}[1]{\href{http://arxiv.org/abs/#1}{\texttt{arXiv:#1}}}


\def\R{\mathbb{R}}


\begin{document}

\title{Optimal Control of Aquatic Diseases: A Case Study of Yemen's Cholera Outbreak}

\author{Ana P. Lemos-Pai\~{a}o \and Cristiana J. Silva\\ 
Delfim F. M. Torres \and Ezio Venturino}

\institute{Ana P. Lemos-Pai\~{a}o \at
Center for Research and Development in Mathematics and Applications (CIDMA),\\
Department of Mathematics, University of Aveiro, 3810-193 Aveiro, Portugal\\
anapaiao@ua.pt
\and
Cristiana J. Silva \at
Center for Research and Development in Mathematics and Applications (CIDMA),\\
Department of Mathematics, University of Aveiro, 3810-193 Aveiro, Portugal\\
cjoaosilva@ua.pt
\and	
Delfim F. M. Torres \Letter \at
Center for Research and Development in Mathematics and Applications (CIDMA),\\
Department of Mathematics, University of Aveiro, 3810-193 Aveiro, Portugal\\
delfim@ua.pt
\and	
Ezio Venturino \at
Department of Mathematics ``Giuseppe Peano'',\\
Universit\`a di Torino, via Carlo Alberto 10, 10123, Torino, Italy\\
Member of the INdAM research group GNCS\\
ezio.venturino@unito.it}


\date{Received: April 6, 2019; Revised: October 12, 2019 and January 21, 2020; Accepted: April 15, 2020}

\dedication{Communicated by Alberto d'Onofrio}

\maketitle


\begin{abstract}
We propose a mathematical model for the transmission dynamics 
of some strains of the bacterium {\it{Vibrio cholerae}}, 
responsible for the cholera disease in humans. We prove that, 
when the basic reproduction number is equal to one, 
a transcritical bifurcation occurs for which the endemic equilibrium 
emanates from the disease-free point. A control function is introduced
into the model, representing the distribution 
of chlorine water tablets for water purification.   
An optimal control problem is then proposed and analyzed, where 
the goal is to determine the fraction of susceptible individuals 
who should have access to chlorine water tablets in order to minimize the 
total number of new infections plus the total cost associated 
with the distribution of chlorine water tablets, over the considered period of time.
Finally, we consider real data of the cholera outbreak 
in Yemen, from 27 April 2017 to 15 April 2018,
choosing the values of the parameters of the
uncontrolled model that fit the real data. Using our optimal
control results, we show, numerically, that the distribution of chlorine water tablets 
could have stopped, in a fast way, the worst cholera outbreak 
that ever occurred in human history. Due to the critical situation of Yemen, 
we also simulate the case where only a small percentage of susceptible individuals 
has access to chlorine water tablets and obtain an optimal control solution that 
decreases, substantially, the maximum number 
of infective individuals attained at the outbreak.
\end{abstract}

\keywords{SIQRB cholera model 
\and Equilibrium points 
\and Feasibility and local stability 
\and Optimal control 
\and 2018 cholera outbreak in Yemen}

\subclass{34C60 \and 49K15 \and 92D30}


\section{Introduction}

Cholera is an acute diarrhoeal illness caused by infection of
the intestine with some strains of the bacterium \textit{Vibrio cholerae}, 
which lives in an aquatic medium. Cholera remains a global threat to public 
health and an indicator of inequity and lack of social development 
\cite{WHO_cholera,LPST}. Cholera is a disease of poverty and
closely linked to poor sanitation and a lack of clean
drinking water \cite{WHO:cholera:vaccines}.
The ingestion of contaminated water can cause cholera outbreaks, 
as John Snow proved, in 1854 \cite{Shuai}. This is a way 
of transmission of the disease, but other ones exist. 
For example, susceptible individuals can become infected 
if they come in contact with infectious individuals. If individuals 
are at an increased risk of infection, then they can transmit 
the disease to other persons that live with them by sharing 
food preparation or water storage containers \cite{Shuai}. 
An individual can be infected with or without symptoms.
Some symptoms are watery diarrhoea, vomiting, and leg cramps. 
If an infective individual does not have treatment, 
then he becomes dehydrated, suffering of acidosis and circulatory
collapse. This situation can lead to death within 12 to 24 hours \cite{Mwasa,Shuai}.
Some studies and experiments suggest that a recovered individual can be immune 
to the disease during a period of 3 to 10 years. However, recent discoveries suggest 
that immunity can be lost after a period of weeks to months \cite{Neilan,Shuai}.
 
Since 1979, several mathematical models for the 
transmission of cholera have been proposed: see, e.g.,
\cite{Capasso:1979,Capone,Codeco:2001,Hartley:2006,Hove-Musekwa,%
Joh:2009,LPST,LPaiaoSilvaTorresYemen2018,Mukandavire:2008,Mwasa,Neilan,Pascual,Shuai} 
and references cited therein. In \cite{Neilan}, the authors propose
an SIR (Susceptible--Infectious--Recovered)
type model and consider two classes of bacterial concentrations 
(hyperinfectious and less-infectious) and two classes 
of infective individuals (asymptomatic and symptomatic). 
In \cite{Shuai}, another SIR-type model is proposed 
that incorporates hyperinfectivity 
(where infectivity varies with the time elapsed
since the pathogen was shed)
and temporary immunity, using distributed delays. 
The authors of \cite{Mwasa} incorporate in a SIR-type model 
public health educational campaigns, vaccination, quarantine and treatment, 
as control strategies in order to curtail the disease. 

The use of quarantine for controlling epidemic diseases 
has always been controversial, because such strategy raises political, 
ethical, and socioeconomic issues and requires a careful balance between 
public interest and individual rights \cite{Tognotti:quarantine}. 
Quarantine was adopted as a mean of separating persons, animals, 
and goods, which may have been exposed to a contagious disease. Since 
the fourteenth century, quarantine has been the cornerstone 
of a coordinated disease-control strategy, including isolation, 
sanitary cordons, bills of health issued to ships, fumigation, 
disinfection, and regulation of groups of persons who were believed 
to be responsible for spreading the infection \cite{Matovinovic,Tognotti:quarantine}.
The World Health Organization (WHO) does not recommend quarantine measures 
and embargoes on the movement of people and goods for cholera. However, 
cholera is still on the list of quarantinable diseases of the EUA National 
Archives and Records Administration \cite{CenterDiseaseControl}.

In this paper, we propose a SIQR (Susceptible--Infectious--Quarantined--Recovered)
type model based on \cite{LPST}, where it is assumed that infective individuals 
are subject to quarantine during the treatment period. 
We refine the model of \cite{LPST}, considering the important situation 
related to the fact that to become infected, a healthy individual 
must intake bacteria from the environment and, by doing it, 
these bacteria are removed from the aquatic environment. 
The model here proposed improves the one in \cite{LPST}, 
since the removal of the ingested bacteria from the environment
by susceptible individuals was not previously considered 
and it must be assumed. Our aim is to discover what happens 
when an efficient strategy through quarantine is implemented.

The consequences of a humanitarian crisis, such as disruption 
of water and sanitation systems or the displacement of populations 
to inadequate and overcrowded camps, can increase the risk 
of cholera transmission \cite{WHO_cholera}. The number of cholera cases 
reported by WHO has continued to be high over the last few years. 
During 2016, $132121$ cases were notified from 38 countries, 
including 2420 deaths \cite{WHO_cholera}. Recently, 
in Yemen the largest outbreak of cholera in the history of the world 
has occurred, \cite{TelegraphNews}. The epidemic began in October 2016 
and in February--March 2017 was in decline. However, on 27 April 2017, 
the epidemic broke out again. This happened ten days after Sanaa's 
sewer system had stopped working. Problems in infrastructures, health, 
water and sanitation systems in Yemen, allowed the fast spread of 
the disease \cite{wikipedia_cholera_yemen}. Between 27 April 2017 
and 15 April 2018, there were 1 090 280 suspected cases 
reported and 2 275 deaths due to cholera \cite{WHO_15abril2018}.
In \cite{Nishiura}, this outbreak is studied mathematically, 
forecasting the cholera epidemic in Yemen and explicitly 
addressing the reporting delay and ascertainment bias. 
On the other hand, in \cite{LPaiaoSilvaTorresYemen2018}  
a SIQRV (Susceptible--Infectious--Quarantined--Recovered--Vaccinated) type model 
is proposed, considering vaccination of susceptible individuals and describing well 
the cholera outbreak in Yemen, between 27 April 2017 and 15 April 2018.
In \cite{Sardar}, a compartmental model with periodic environment
is proposed, using a real-life data set of cholera epidemic for Zimbabwe, between 2008 and 2011.

Optimal control is a branch of mathematics developed
to find optimal ways to control a dynamical system
\cite{Cesari_1983,Fleming_Rishel_1975,Pontryagin_et_all_1962}.
There are few papers that apply optimal control to cholera models, see e.g. \cite{Neilan,LPST,Sardar}. 
Here we propose and analyze one such optimal control problem, 
where the control function represents the fraction of susceptible 
individuals $S$ who has access to chlorine water tablets (CWT) 
for water purification. Therefore, they are protected from infection. 
The objective is to find the optimal strategy through 
the use of CWT that minimizes the total number 
of new infections plus the total cost associated with 
CWT interventions. CWT are effervescent tablets that kill 
micro-organisms in water to prevent cholera, typhoid, dysentery, 
and other water borne diseases. There are different sizes of CWT 
and each tablet size is formulated to treat a specific volume 
of water, ranging from 1 liter to 2 500 liters. 

We prove that the extremal controls, 
derived from the Pontryagin Maximum Principle, satisfy the so-called 
strict bang-bang property \cite{Osmolovskii}.
Through numerical simulations, we show that the strategy 
given by the solution of the optimal control problem could have stopped, 
in a short time, the cholera outbreak on Yemen. 
Moreover, we simulate the cases of low, sufficient, and abundant resources, 
finding the interval of time needed to stop the outbreak in Yemen. 

The paper is organized as follows. In Section~\ref{Sec:model}, 
we propose a model for cholera transmission dynamics.
We analyze, in Section~\ref{Sec:mod:analysis}, the positivity 
and boundedness of the solutions, as well as the local stability 
and feasibility of the disease-free and endemic equilibria. 
In Section~\ref{sec:ocp}, we propose and analyze an optimal 
control problem for the minimization of the number of new infections 
through the distribution of CWT. 
Section~\ref{sec:num:simu} is devoted to numerical simulations 
and a case study in Yemen. The concluding Section~\ref{sec:conc}
discusses the optimal impact of CWT distribution on the control 
of the cholera outbreak in Yemen, pointing also
some directions for future work.


\section{Model Setup}
\label{Sec:model}

The model contains the fundamental populations, identified as follows.
The humans are subdivided into the susceptible $S$, infective $I$, quarantined $Q$,
and recovered $R$. Then we also consider the free bacteria population living in the environment,
$B$. This is an important specification, as to become infected, a healthy individual
must intake bacteria from the environment and, in doing so, these bacteria are removed
from the aquatic medium. This feature, absent in \cite{LPST}, must be incorporated in the model,
to have a meaningful formulation. The model equations read as follows:
\begin{equation}
\label{ModeloColera}
\begin{cases}
\begin{split}
\dot{S}(t)=\ &\Lambda-\displaystyle\frac{\beta B(t)S(t)}{\kappa+B(t)}+\omega R(t)-\mu S(t)=: f_1,\\[0.3 cm]
\dot{I}(t)=\ &\displaystyle\frac{\beta B(t)S(t)}{\kappa+B(t)}-(\delta+\alpha_1+\mu)I(t)=: f_2,\\[0.3 cm]
\dot{Q}(t)=\ &\delta I(t)-(\varepsilon+\alpha_2+\mu)Q(t)=: f_3,\\[0.3 cm]
\dot{R}(t)=\ &\varepsilon Q(t)-(\omega+\mu)R(t)=: f_4,\\[0.3 cm]
\dot{B}(t)=\ &\eta I(t)-dB(t) - \displaystyle\frac{\rho B(t)S(t)}{\kappa+B(t)} =: f_5.
\end{split}
\end{cases}
\end{equation}
It should be noted that the new term 
for the removal of bacteria from the environment, 
through their uptake by susceptible individuals,	
here added with respect to \cite{LPST}, is very relevant.
Using the values for the parameters of Section~\ref{sec:num:simu},
Figure~\ref{fig1:rv2} quantifies the new term for the removal of bacteria 
from the environment along time. As it can be seen in this figure, 
the new term is quantitatively important.
\begin{figure}[!htb]
\centering
\includegraphics[scale=0.4]{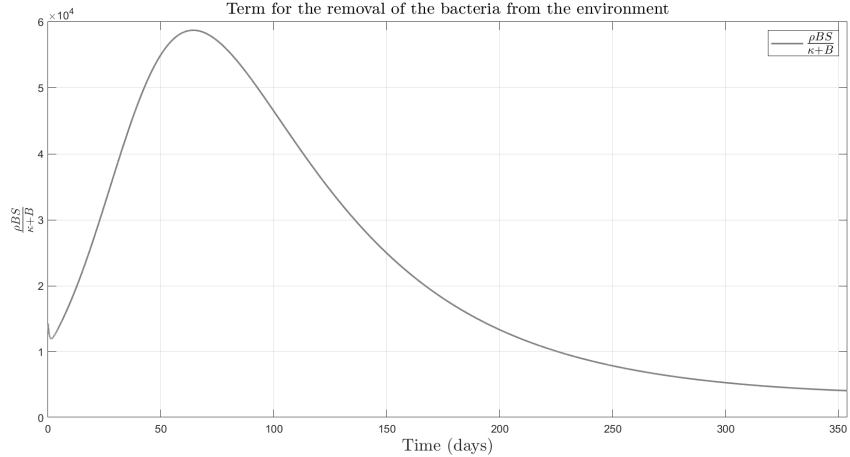}
\begin{minipage}{0.7\textwidth}
\caption{\small New term for the removal of bacteria from the environment 
through their uptake by susceptible individuals.}
\label{fig1:rv2}
\end{minipage}
\end{figure}
We could also consider that infected individuals also ingest contaminated
water. This would account for the modification of the last/bacteria equation,
by including additional terms $-aBI/(\kappa + B)$ and $-bBR/(\kappa + B)$.
However, we verified that these new terms are not quantitatively important.
	
The first equation describes the evolution of the healthy people. We assume to have an input
of new individuals at constant rate $\Lambda$, that can get infected by ingestion of bacteria
from the environment. This mechanism is expressed by the second term, with a saturating response function
in terms of the bacteria. It behaves linearly at first, as the more bacteria are acquired,
the higher the probability of getting the infection.
But this holds up to a certain point, because if the water already ingested is full of germs, to drink more of
it, will not substantially increase the threat to the health of the consumer, which is already quite endangered.
Therefore, the infection contagion level will saturate at rate $\beta$.
Note that more bacteria to be ingested to start the infection 
is already embedded in this parameter. The remaining terms in the equation represent the individuals that, 
after recovery, become susceptible again, at rate $\omega$, and the natural human mortality, at rate $\mu$.

The second equation for the infective contains the new recruitments, coming from the healthy
individuals as described above, and then the losses of this class, namely, the migration to the 
quarantined class, at rate $\delta$, the disease-related mortality, at rate $\alpha_1$, 
and the natural mortality, at rate $\mu$.
Although the assumption that all infected individuals enter the quarantine stage
may be a far-fetched assumption, if such ambitious studies are not carried out, 
almost no one will strive to improve recent and problematic endemic realities.
The uptake of viruses from bacteria in an aquatic environment is described in
\cite{MR1629476} (but see also \cite{MR2492346}
for a revisitation of the above model), a system that roughly corresponds to the first, 
second and last equation in the model at hand, where now bacteria take the place 
of viruses and people replace bacteria. Here, we further modify it considering that usually 
in ecological settings the quantity of food (or water)
consumption is usually assumed to be modeled via a Holling type II response function.
In turn, the mechanism describes the fact that the infection rate will saturate with the number of bacteria.

The quarantined class, third equation, incorporates the infective individuals who are so identified
at rate $\delta$. Then, it can lose people in three different ways: through migration to the recovered
class, at rate $\varepsilon$; still by disease-related mortality, but at a possibly different rate $\alpha_2$;
or, finally, by natural mortality, at rate $\mu$.

The recovered individuals come from the quarantined class at rate $\varepsilon$ and leave either by
returning susceptible at rate $\omega$ or by natural mortality. In this case, the people are not ill 
and, therefore, the disease-related mortality is not present.

The last equation accounts for the free bacteria in the environment, i.e., the water. 
In this medium they cannot survive, their mortality rate being $d$.
Nevertheless, they are continuously reversed into it by the infective individuals $I$, at rate $\eta$. 
Indeed, within the body of the infective, they do reproduce, 
this indeed being the cause of the illness, and are then released in the
open environment. A similar phenomenon could occur for the
quarantined individuals, who are still subject to the disease, but it is assumed that, as they are isolated
and treated in the hospitals, measures are taken so that they cannot propagate the infection. In particular,
they are prevented from fouling the water with new bacteria coming from the dejections of their bodies.
The third term in the equation describes, as mentioned before stating the model, the uptake of bacteria
from the water by healthy individuals during the infection process. 
We can easily assume that quarantined people are taken care of and therefore not exposed to and ingest
contaminated water. It is also reasonable to assume that, after recovery,
individuals would be very careful about the water they consume, in view of the fact
that during the treatment period they would be informed about the source of the disease.
We also assume that there is a prompt response of the authorities to infection through treatment (quarantine) 
so that the amount of bacteria ingested by infected individuals is residual and can be neglected.
Therefore, we can assume that bacteria are not taken up from the environment by these
classes of individuals. On the other hand, as already stated, they are essential in rendering
susceptible individuals diseased.

In a more abstract setting, we can rewrite \eqref{ModeloColera} 
by introducing the vector of the time-dependent variables,
\begin{equation}
\label{eq:def:X}
X = (x_1, x_2, x_3, x_4, x_5) = (S, I, Q, R, B),
\end{equation} 
for which \eqref{ModeloColera}, in compact form, becomes:
\begin{equation}
\label{ModeloColeraX}
\dot{x_i}(t)=f_i(x_1,\ldots,x_5), \quad i=1,\ldots,5.
\end{equation}


\section{Analysis of the Model}
\label{Sec:mod:analysis}

Our analysis of \eqref{ModeloColera} follows the pattern used in \cite{LPST}.


\subsection{Preliminary Results}
\label{subsec:preliminary_results}

Assuming the ecologically meaningful nonnegative initial conditions for the populations,
the solutions of the dynamical system remain nonnegative for all time. This result is
contained in Lemma~1 of \cite{LPST} and translates in this situation without any change.
The solutions not only remain in the positive cone, but are also bounded and the positively
invariant set $\Omega$ is the same already found in \cite{LPST}, namely:
\begin{equation}
\label{eq:Omega}
\begin{gathered}
\Omega = \Omega_H \times \Omega_B, \qquad 
\Omega_B = \left\{ B \in \R_0^+ :  B(t)  
\leq \frac{\Lambda\eta}{\mu d} \right\}, \\ 
\Omega_H = \left\{ (S, I, Q, R) \in \left(\R_0^+\right)^4 : 
S(t) + I(t) + Q(t) + R(t) \leq \frac{\Lambda}{\mu} \right\}.
\end{gathered}
\end{equation}
The proof is essentially the same, with only a change in the derivation
of the upper bound for the bacteria population, which is obtained
by dropping the last term of the last equation in \eqref{ModeloColera}
to obtain the same bound:
\begin{equation*}
\dot{B}(t) \leq \eta I(t)- d B(t) \leq \eta \frac{\Lambda}{\mu} - d B(t) \, . 
\end{equation*}


\subsection{System's Equilibria}
\label{sec:3.2}

The only possible equilibria of model \eqref{ModeloColera} 
are the disease-free point (DFE) and coexistence, 
or the endemic equilibrium (EE), as in \cite{LPST}.
However, we will repeat here the analysis in some detail 
as it entails some relevant differences.

As for the DFE, we find again
\begin{equation}
\label{eq:DFE}
E^0=(S^0,I^0,Q^0,R^0,B^0)=\left(\frac{\Lambda}{\mu},0,0,0,0\right).
\end{equation}
The basic reproduction number $R_0$ can then be evaluated, 
following \cite{Mwasa,Driessche}.

\begin{proposition}
\label{prop:R0}
The basic reproduction number of model \eqref{ModeloColera} is
\begin{equation}
\label{R0}
R_0
= \dfrac {\beta\Lambda\eta}{(\delta+\alpha_1+\mu)\, 
\left( \mu\kappa d+\rho\,\Lambda \right) }.
\end{equation}
\end{proposition}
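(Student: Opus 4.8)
The plan is to compute $R_0$ by the next-generation matrix method of van den Driessche and Watmough, exactly as invoked in \cite{Driessche,Mwasa}. First I would single out the compartments in which the infection is present, namely $I$, $Q$ and $B$. Since the quarantined class receives no direct infection term and never feeds back into the transmission chain, its row and column contribute only a null entry to the spectral radius of interest; in practice it therefore suffices to work with the reduced infected pair $(I,B)$, the compartment $Q$ adding merely a zero eigenvalue.

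Next I would write the $I$- and $B$-equations in the form $\dot{x}_i=\mathcal{F}_i-\mathcal{V}_i$, where $\mathcal{F}$ collects the genuinely new infections and $\mathcal{V}$ the remaining transfer terms. The decisive modeling choice is that the \emph{only} new-infection term is the incidence $\beta B S/(\kappa+B)$ entering the $I$-equation; the bacterial shedding $\eta I$ in the $B$-equation is regarded as an internal transfer and placed in $\mathcal{V}$, alongside the bacterial death $dB$ and the newly added uptake term $\rho B S/(\kappa+B)$. Linearizing at the disease-free point $E^0=(\Lambda/\mu,0,0,0,0)$ then gives
\begin{equation*}
F = \begin{pmatrix} 0 & \dfrac{\beta\Lambda}{\mu\kappa} \\[0.2cm] 0 & 0 \end{pmatrix},
\qquad
V = \begin{pmatrix} \delta+\alpha_1+\mu & 0 \\[0.2cm] -\eta & d+\dfrac{\rho\Lambda}{\mu\kappa} \end{pmatrix},
\end{equation*}
the two key linearizations being $\partial_B\bigl(\beta B S/(\kappa+B)\bigr)\big|_{E^0}=\beta\Lambda/(\mu\kappa)$ and $\partial_B\bigl(\rho B S/(\kappa+B)\bigr)\big|_{E^0}=\rho\Lambda/(\mu\kappa)$, which follow from $\partial_B\bigl(B/(\kappa+B)\bigr)|_{B=0}=1/\kappa$ evaluated at $S=\Lambda/\mu$.

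Then $R_0=\rho(FV^{-1})$. Because $V$ is lower triangular it inverts at once, and since $F$ has a single nonzero row the product $FV^{-1}$ is again triangular, so its spectral radius is read off the diagonal as
\begin{equation*}
R_0=\frac{\beta\Lambda}{\mu\kappa}\cdot\frac{\eta}{(\delta+\alpha_1+\mu)\left(d+\dfrac{\rho\Lambda}{\mu\kappa}\right)}.
\end{equation*}
Clearing the inner denominator via $d+\rho\Lambda/(\mu\kappa)=(\mu\kappa d+\rho\Lambda)/(\mu\kappa)$ yields precisely \eqref{R0}.

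I expect the only genuine obstacle to be the $\mathcal{F}$–$\mathcal{V}$ splitting rather than the algebra: assigning the shedding term $\eta I$ to $\mathcal{F}$ instead would make $FV^{-1}$ have off-diagonal structure with eigenvalues $\pm\sqrt{\cdot}$, producing the \emph{square root} of the above expression, so the linear (non-radical) form recorded in \eqref{R0} forces $\eta I$ into $\mathcal{V}$. I would also emphasize that it is exactly the new uptake term that turns the $(B,B)$ entry of $V$ from $d$ into $d+\rho\Lambda/(\mu\kappa)$, and hence produces the factor $\mu\kappa d+\rho\Lambda$ in the denominator — the sole place where the refinement over \cite{LPST} registers in the basic reproduction number.
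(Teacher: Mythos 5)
Your proposal is correct and follows exactly the approach the paper invokes: the paper's own proof is a one-line appeal to the next-generation matrix method of \cite{Diekmann} (and \cite{Driessche,Mwasa}), and your computation fills in precisely the details that are left implicit, with the right $\mathcal{F}$--$\mathcal{V}$ splitting (shedding $\eta I$ in $\mathcal{V}$, so that $R_0$ appears without a square root, consistent with the paper's stability analysis of $E^0$) and the correct identification of how the uptake term $\rho BS/(\kappa+B)$ generates the factor $\mu\kappa d+\rho\Lambda$. Your reduction to the pair $(I,B)$ by discarding $Q$ is also legitimate, since including $Q$ only appends a zero eigenvalue to $FV^{-1}$.
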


\begin{proof}
Follows easily using the methods described in \cite{Diekmann}.
\hfill $\Box$
\end{proof}

The basic reproduction number proves to be instrumental in the local
stability issue of the DFE $E^0$, as it is shown in the next result.

\begin{theorem}
The disease-free equilibrium $E^0$ of model \eqref{ModeloColera} is
locally asymptotic stable if 
\begin{equation}
\label{DFE_stab}
R_0<1.
\end{equation}
\end{theorem}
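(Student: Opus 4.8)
The plan is to linearize the system \eqref{ModeloColera} at the disease-free equilibrium $E^0$ and show that the Jacobian has all eigenvalues with negative real part precisely when $R_0 < 1$. First I would compute the Jacobian matrix $J = (\partial f_i/\partial x_j)$ and evaluate it at $E^0 = (\Lambda/\mu, 0,0,0,0)$. Because $I^0 = Q^0 = R^0 = B^0 = 0$ and the infection term $\beta B S/(\kappa + B)$ vanishes to first order in $B$ at $B=0$ (its linearization contributes only $\beta S^0/\kappa = \beta \Lambda/(\mu\kappa)$ in the $\partial/\partial B$ direction), the Jacobian should be sparse and nearly block-triangular. I expect the $S$-equation to decouple partially, yielding an obvious eigenvalue $-\mu$ from the $\dot S$ row, after which the stability question reduces to a smaller block involving the $I$, $Q$, $R$, $B$ variables.

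Next I would exploit the triangular structure that the quarantine and recovery cascade produces. The $\dot Q$ equation depends on $Q$ and $I$ only, and $\dot R$ on $R$ and $Q$ only, so the diagonal entries $-(\varepsilon + \alpha_2 + \mu)$ and $-(\omega + \mu)$ appear as eigenvalues of triangular sub-blocks and are manifestly negative. The essential coupling that can destabilize the system lives in the $(I, B)$ subsystem: $\dot I$ picks up the gain $\beta \Lambda/(\mu\kappa)$ from $B$ and loses mass at rate $(\delta + \alpha_1 + \mu)$, while $\dot B$ is fed by $\eta I$ and decays through $-d B$ together with the new removal term, whose linearization at $B = 0$ contributes $-\rho S^0/\kappa = -\rho\Lambda/(\mu\kappa)$ to the $\partial \dot B/\partial B$ entry. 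So the reduced $2\times 2$ block has trace $-(\delta+\alpha_1+\mu) - d - \rho\Lambda/(\mu\kappa)$, which is negative, and its stability hinges entirely on the sign of the determinant.

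The main step is therefore to compute this determinant and relate it to $R_0$. The determinant of the $(I,B)$ block is
\begin{equation*}
(\delta+\alpha_1+\mu)\Bigl(d + \frac{\rho\Lambda}{\mu\kappa}\Bigr) - \frac{\beta\Lambda\eta}{\mu\kappa},
\end{equation*}
and factoring out $(\delta+\alpha_1+\mu)(\mu\kappa d + \rho\Lambda)/(\mu\kappa)$ shows this quantity is positive exactly when $R_0 < 1$, using formula \eqref{R0}. Combined with the negative trace, the Routh--Hurwitz criterion for a $2\times 2$ matrix (negative trace and positive determinant) then guarantees both eigenvalues of the block have negative real part. Assembling the eigenvalue $-\mu$ and the negative eigenvalues from the triangular $Q$-- and $R$--rows with this $(I,B)$ block establishes that every eigenvalue of the full Jacobian has negative real part, so $E^0$ is locally asymptotically stable whenever $R_0 < 1$.

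I expect the main obstacle to be bookkeeping rather than conceptual: correctly identifying which entries of the $5\times 5$ Jacobian survive at $E^0$ (in particular verifying that the saturating term and the new bacteria-removal term linearize to the coefficients claimed above) and then isolating the genuinely coupled $(I,B)$ block from the triangular remainder. Once the block decomposition is set up, the connection to $R_0$ via the determinant is a short algebraic identity, so the proof is essentially a careful application of the Routh--Hurwitz conditions to a reduced subsystem.
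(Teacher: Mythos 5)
Your proposal is correct and follows essentially the same route as the paper: both linearize at $E^0$, peel off the three explicit negative eigenvalues $-\mu$, $-(\varepsilon+\alpha_2+\mu)$, $-(\omega+\mu)$, and reduce stability to the same quadratic for the coupled $(I,B)$ block, where the Routh--Hurwitz condition (negative trace, positive determinant) is exactly $R_0<1$. The only cosmetic difference is that the paper phrases the computation via the $\mathcal{F}-\mathcal{V}$ splitting of van den Driessche--Watmough, while you work directly with the Jacobian's block-triangular structure; the resulting characteristic equation and final inequality are identical.
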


\begin{proof}
Let us write the right-hand side of system 
\eqref{ModeloColera} as $\mathcal{F} - \mathcal{V}$ 
according to the approach of \cite{Driessche}.
The characteristic equation of \eqref{ModeloColera} evaluated at the DFE,
$p(\chi)=\det(F_0-V_0-\chi I_5)$ with $F_0$ and $V_0$ 
the Jacobian matrices of $\mathcal{F}$ and $\mathcal{V}$ computed 
at the disease free equilibrium $E^0$, respectively, 
factorizes to produce three explicit 
eigenvalues, $-\mu<0$, $-a_2<0$, and $-a_3<0$,
and a quadratic equation in $\chi$:
$$
(a_1+\chi )\left( d+ \dfrac{\rho \Lambda}{\mu\kappa} +\chi \right) 
- \displaystyle\frac{\beta \Lambda \eta}{\mu\kappa} =0,
$$
for which the Routh--Hurwitz conditions are easily seen to hold if
$$
a_1 \left( d+ \dfrac{\rho \Lambda}{\mu\kappa} \right) 
- \displaystyle\frac{\beta \Lambda \eta}{\mu\kappa} > 0,
$$
which amounts to the condition \eqref{DFE_stab}.
\hfill $\Box$
\end{proof}

A converse result holds in case of the opposite condition, 
as illustrated in the following result.

\begin{proposition}
\label{prop:EE}
Let $a_1=\delta+\alpha_1+\mu$, 
$a_2=\varepsilon+\alpha_2+\mu$, 
and $a_3=\omega+\mu$.
Assume that $\lambda^*,\ \delta,\ \varepsilon,\ \omega>0$. 
If $R_0>1$, then model \eqref{ModeloColera} has the endemic equilibrium
\begin{equation}
\label{EndemicEquilibrium}
E^*=(S^*,I^*,Q^*,R^*,B^*)
=\left(\frac{\Lambda a_1a_2a_3}{D},\frac{\Lambda a_2a_3\lambda^*}{D},
\frac{\Lambda\delta a_3\lambda^*}{D},
\frac{\Lambda \delta\varepsilon\lambda^*}{D}, (\beta \eta - \rho a_1)
\frac{\Lambda a_2a_3\lambda^*}{\beta Dd}\right),
\end{equation}
where 
\begin{equation}
\label{D}
D=a_1a_2a_3(\lambda^*+\mu)-\delta\varepsilon\omega\lambda^*, 
\quad \lambda^*= \beta B^* (\kappa+B^*)^{-1},
\end{equation}
which is feasible if
\begin{equation}
\label{EE_feas}
\beta \eta > \rho a_1.
\end{equation}
\end{proposition}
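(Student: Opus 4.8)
The plan is to locate the nonzero equilibrium by setting $f_1 = \cdots = f_5 = 0$ in \eqref{ModeloColera} and exploiting the triangular structure of the $I$, $Q$, $R$ equations. Writing $\lambda^* = \beta B^*(\kappa + B^*)^{-1}$ for the force of infection at equilibrium, the equation $f_2 = 0$ reads $\lambda^* S^* = a_1 I^*$, so $I^* = \lambda^* S^*/a_1$; then $f_3 = 0$ and $f_4 = 0$ give successively $Q^* = \delta I^*/a_2$ and $R^* = \varepsilon Q^*/a_3$. Thus $I^*$, $Q^*$, $R^*$ are all expressed as explicit multiples of $\lambda^* S^*$, and the hypotheses $\delta,\varepsilon,\omega>0$ guarantee that none of these compartments collapses once $\lambda^* > 0$.

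Next I would substitute $R^* = \varepsilon\delta\lambda^* S^*/(a_1 a_2 a_3)$ into $f_1 = 0$, that is, into $\Lambda - \lambda^* S^* + \omega R^* - \mu S^* = 0$. Collecting the $S^*$ terms produces exactly the factor $D = a_1 a_2 a_3(\lambda^* + \mu) - \delta\varepsilon\omega\lambda^*$ of \eqref{D}, giving $S^* = \Lambda a_1 a_2 a_3/D$; back-substitution then yields the stated expressions for $I^*$, $Q^*$, $R^*$. For the bacteria I would use the identity $\rho B^*(\kappa + B^*)^{-1} = (\rho/\beta)\lambda^*$, which turns the uptake term in $f_5 = 0$ into $(\rho a_1/\beta)\,I^*$ after invoking $\lambda^* S^* = a_1 I^*$. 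Solving $\eta I^* - d B^* - (\rho a_1/\beta) I^* = 0$ for $B^*$ gives $B^* = (\beta\eta - \rho a_1)I^*/(\beta d)$, which is precisely the fifth component of \eqref{EndemicEquilibrium}.

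These formulas are genuine equilibria only if they are self-consistent with the definition of $\lambda^*$. The crucial closing step is therefore to combine $B^* = \kappa\lambda^*/(\beta - \lambda^*)$ (the inversion of $\lambda^* = \beta B^*/(\kappa + B^*)$) with the expression for $B^*$ just found. Cancelling the common factor $\lambda^*$ (legitimate on the endemic branch $\lambda^* \neq 0$, the branch $\lambda^*=0$ returning the DFE) and cross-multiplying leaves, since $D$ is affine in $\lambda^*$ and every remaining factor is affine as well, a single \emph{linear} equation for $\lambda^*$, hence a unique explicit root. I expect this self-consistency to be the main obstacle: one must verify that the algebra really collapses to a linear equation and that its solution respects $0 < \lambda^* < \beta$, so that the $B^*$ recovered from $\lambda^*$ is positive and finite.

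It then remains to read off the signs. One checks $D > 0$ unconditionally, because $a_1 > \delta$, $a_2 > \varepsilon$, $a_3 > \omega$ force $a_1 a_2 a_3 > \delta\varepsilon\omega$ and both contributions to $D$ are positive. The numerator of the explicit $\lambda^*$ factors as a positive constant times $\beta\eta\Lambda - a_1(\mu\kappa d + \rho\Lambda)$, that is, as a positive multiple of $R_0 - 1$ by \eqref{R0}; hence $\lambda^* > 0$ precisely when $R_0 > 1$ (the denominator being positive once $\beta\eta>\rho a_1$). Finally, with $D > 0$ and $\lambda^* > 0$, each of $S^*, I^*, Q^*, R^*$ is a positive multiple of $\lambda^*$, while the sign of $B^*$ is that of $\beta\eta - \rho a_1$; therefore all five coordinates are strictly positive, and the equilibrium is feasible, exactly under condition \eqref{EE_feas}.
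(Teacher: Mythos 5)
Your proposal is correct and takes essentially the same route as the paper's proof: a triangular solve of the $I$, $Q$, $R$ equilibrium equations, substitution into the $S$-equation to produce the factor $D$ of \eqref{D}, use of the fifth equation to get $B^*$ (whence the feasibility condition \eqref{EE_feas}), and closure via the self-consistency of $\lambda^*=\beta B^*(\kappa+B^*)^{-1}$, which collapses to a linear equation whose root is a positive multiple of $R_0-1$. The only cosmetic differences are that the paper isolates $R^*$ first rather than $S^*$, and your identity $\rho B^*(\kappa+B^*)^{-1}=(\rho/\beta)\lambda^*$ makes explicit a step the paper leaves implicit when extracting $B^*$ from the fifth equation.
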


\begin{proof}
For this equilibrium to be feasible, the transmission rate must be strictly positive:
$$
\beta B^*(t)\left(\kappa+B^*(t)\right)^{-1}>0. 
$$
Solving in turn the second, third, and fourth equilibrium equation of \eqref{ModeloColera},
we find
$$
S^*=\dfrac {a_1}{\lambda^*} I^*, \quad I^*=\dfrac {a_2}{\delta} Q^*, 
\quad Q^*=\dfrac {a_3}{\varepsilon} R^*.
$$
Then, we obtain $$S^*= \dfrac {a_1a_2a_3}{\lambda^*\delta\varepsilon} R^*.$$
Substituting the last evaluated value of $S^*$ into the first equilibrium equation, 
we then obtain $\delta \varepsilon \lambda^* \Lambda - DR^*=0$, 
which gives the fourth component of \eqref{EndemicEquilibrium},
and by back substitution also the first three. Finally, the fifth 
equation provides the value of $B^*$, which must be nonnegative 
to be feasible, giving thus \eqref{EE_feas}. Now, from 
$\lambda^*= \beta B^* (\kappa+B^*)^{-1}$, 
substituting the value of $B^*$ and rearranging, we obtain
$$
\{[\Lambda (\beta \eta - \rho a_1)+\kappa \beta d a_1] a_2a_3 
- \kappa \beta d\delta\varepsilon\omega \}\lambda^*
=[\beta\Lambda\eta -a_1 (\rho \Lambda + \mu \kappa d )] \beta a_2 a_3.
$$
It follows that
$$
\lambda^*
=\dfrac { \beta ( R_0 - 1 ) a_1 a_2a_3 (\rho\Lambda + \mu \kappa d )} 
{\Lambda (\beta\eta -\rho a_1) a_2a_3+\kappa \beta d [a_1 a_2a_3 - \delta\varepsilon\omega ]}.
$$
In view of \eqref{EE_feas}, and the fact that
\begin{equation}
\label{a1a2a3}
a_1a_2a_3-\delta\varepsilon\omega=(\delta+\alpha_1+\mu)(\varepsilon
+\alpha_2+\mu)(\omega+\mu)-\delta\varepsilon\omega>0,
\end{equation}
because $\alpha_1, \alpha_2 \geq 0$ and all the other coefficients are positive,
the above value of $\lambda^*$ is positive if and only if $R_0>1$. In such case 
the model \eqref{ModeloColera} has the endemic equilibrium \eqref{EndemicEquilibrium}.
This concludes the proof.
\hfill $\Box$
\end{proof}

\begin{remark}
Comparing the local stability condition for the DFE and the feasibility
condition for the EE, it is easily seen that for $R_0=1$ a transcritical bifurcation
occurs for which the endemic equilibrium $E^*$ emanates from the disease-free
point $E^0$.
\end{remark}

\begin{remark}
Note that the feasibility result for $E^*$
differs from the corresponding one in \cite{LPST}
because for the epidemics to subsist, it is necessary that 
inequality \eqref{EE_feas} holds.
\end{remark}

We next consider the local stability issue of the EE. Although the final result coincides
with the one obtained in \cite{LPST}, there are some details that change. Thus, we present
also its proof, following the same steps and employing the abstract formulation
given by \eqref{eq:def:X} and \eqref{ModeloColeraX}. 

\begin{theorem}
The equilibrium points $E^0$ and $E^*$ of \eqref{ModeloColera} 
are, respectively, unstable and locally asymptotic stable for $R_0 > 1$.
\end{theorem}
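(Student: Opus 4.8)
The plan is to treat the two assertions separately, each by linearizing \eqref{ModeloColera} about the relevant equilibrium and locating the signs of the real parts of the eigenvalues of the Jacobian. No global argument is needed, since only local asymptotic stability is claimed; so in both cases the task reduces to a spectral sign analysis.

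For the instability of $E^0$ when $R_0>1$, I would reuse the factorization of the characteristic polynomial already obtained in the proof of the stability theorem for the DFE. There the spectrum at $E^0$ split into the three manifestly negative eigenvalues $-\mu$, $-a_2$, $-a_3$ together with the roots of the quadratic
\[
\chi^2+\Bigl(a_1+d+\tfrac{\rho\Lambda}{\mu\kappa}\Bigr)\chi
+\Bigl[a_1\Bigl(d+\tfrac{\rho\Lambda}{\mu\kappa}\Bigr)-\tfrac{\beta\Lambda\eta}{\mu\kappa}\Bigr]=0.
\]
The linear coefficient is positive unconditionally, whereas the constant term equals $\bigl[a_1(\mu\kappa d+\rho\Lambda)-\beta\Lambda\eta\bigr]/(\mu\kappa)$, which by \eqref{R0} is positive precisely when $R_0<1$. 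Hence for $R_0>1$ this constant term is negative, the product of the two roots is negative, and one of them is a positive real eigenvalue. Therefore $E^0$ is unstable for $R_0>1$, and this part costs essentially nothing beyond the computation already in hand.

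The substantive part is the local asymptotic stability of $E^*$. I would form the full Jacobian $J^*=\bigl[\partial f_i/\partial x_j\bigr]$ of \eqref{ModeloColeraX} at $E^*$, simplifying its entries with the equilibrium identities — in particular $\lambda^*=\beta B^*(\kappa+B^*)^{-1}$ from \eqref{D}, so that $\partial f_2/\partial S=\lambda^*$ and $\partial f_5/\partial S=-(\rho/\beta)\lambda^*$, while the bacteria row additionally carries $\partial f_5/\partial B=-d-\rho S^*\kappa(\kappa+B^*)^{-2}$. Expanding $p(\chi)=\det(J^*-\chi I_5)$ then yields a quintic $\chi^5+c_4\chi^4+c_3\chi^3+c_2\chi^2+c_1\chi+c_0$, and the goal becomes to verify the Routh--Hurwitz conditions: positivity of every $c_i$ together with positivity of the associated Hurwitz determinants.

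I expect this Routh--Hurwitz verification to be the main obstacle, since for degree five the coefficients $c_i$ and the Hurwitz minors are bulky rational expressions in the parameters and in the components of $E^*$. The sign analysis should be driven by three facts proved earlier: the feasibility inequality \eqref{EE_feas}, namely $\beta\eta>\rho a_1$, which both renders $B^*$ positive and controls the new $\rho$-dependent entries; the positivity \eqref{a1a2a3} of $a_1a_2a_3-\delta\varepsilon\omega$; and $R_0>1$, which through \eqref{D} guarantees $\lambda^*>0$ and $D>0$. Since the only structural departure from the model of \cite{LPST} is the term $-\rho BS/(\kappa+B)$, which perturbs only the bacteria row of $J^*$, I would isolate the $\rho$-dependent contributions to each $c_i$ and to each Hurwitz determinant and check that they preserve the required signs, exploiting the partial decoupling of the $(Q,R)$ block to keep the algebra manageable. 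Should a direct check of the top Hurwitz determinants prove intractable, an alternative is to invoke the transcritical bifurcation noted in the Remark to obtain stability of $E^*$ for $R_0$ just above one, and then argue that no eigenvalue can cross the imaginary axis for $R_0>1$ without violating these same sign conditions.
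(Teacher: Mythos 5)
The instability half of your argument is fine and even more direct than the paper's: for $R_0>1$ the constant term of the quadratic factor at $E^0$, namely $\bigl[a_1(\mu\kappa d+\rho\Lambda)-\beta\Lambda\eta\bigr]/(\mu\kappa)$, is negative, so the product of the two roots is negative and one of them is real and positive. The genuine gap is in the stability of $E^*$, where you offer only a plan, not a proof. Your primary route --- expanding $\det(J^*-\chi I_5)$ at the endemic equilibrium and verifying the full Routh--Hurwitz conditions for the resulting quintic --- is never carried out; you yourself flag the Hurwitz minors as possibly intractable, and nothing in the proposal actually shows that the $\rho$-dependent entries preserve the required signs. The paper sidesteps this computation entirely by using the bifurcation theorem of Castillo-Chavez and Song \cite{Castillo}: it takes $\beta$ as bifurcation parameter, verifies that at $\beta=\beta^*$ (equivalently $R_0=1$) the Jacobian at $E^0$ has a simple zero eigenvalue with all other eigenvalues negative, computes the left and right null eigenvectors, and evaluates the two normal-form coefficients, finding $a<0$ and $b>0$. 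This single center-manifold computation, involving only second derivatives of $f_2$ and $f_5$ at the disease-free point, yields simultaneously the instability of $E^0$ and the local asymptotic stability of $E^*$ for $R_0>1$, with no spectral analysis at $E^*$ whatsoever.

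Your fallback route does not repair the gap, because it is circular. The Remark you propose to invoke asserts only that a transcritical bifurcation \emph{occurs} at $R_0=1$, by comparing the feasibility condition \eqref{EE_feas} for $E^*$ with the stability condition for $E^0$; it says nothing about the \emph{direction} of the bifurcation, i.e., whether the emanating endemic branch is stable or unstable. Establishing that the bifurcation is forward with a stable endemic branch is exactly the content of the sign computation $a<0$, $b>0$ that the paper performs and that you have skipped. Moreover, the final step of your fallback --- arguing that no eigenvalue can cross the imaginary axis for any $R_0>1$ --- would require excluding purely imaginary roots of the quintic characteristic polynomial throughout the parameter region (a Hopf-type crossing), which is essentially the same Routh--Hurwitz boundary analysis you were trying to avoid. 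As written, neither branch of your proposal establishes the stability assertion.
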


\begin{proof}
We apply the method of \cite[Theorem~4.1]{Castillo}, 
choosing $\beta^*$ as bifurcation parameter. Finding its value from $R_0=1$, we have
\begin{align*}
\beta^* = 
\dfrac{a_1 (\rho \Lambda + \mu \kappa d)}{\Lambda\eta},
\end{align*}
which is positive in view of \eqref{EE_feas}. 
At $\beta^*$, the Jacobian of \eqref{ModeloColeraX} 
evaluated at $E^0$ becomes
\begin{align*}
J_0^*=\left[
\begin{matrix}
-\mu & 0 & 0 & \omega & -\displaystyle\frac{a_1(\rho\Lambda+\mu\kappa d)}{\eta\mu\kappa} \\
0 & -a_1 & 0 & 0 & \displaystyle\frac{a_1(\rho\Lambda+\mu\kappa d)}{\eta\mu\kappa} \\
0 & \delta & -a_2 & 0 & 0 \\
0 & 0 & \varepsilon & -a_3 & 0 \\
0 & \eta & 0 & 0 & -\displaystyle\frac{\rho \Lambda + \mu\kappa d }{\mu\kappa}
\end{matrix}
\right].
\end{align*}
Its eigenvalues are
$-a_1-\displaystyle\frac{\rho\Lambda+\mu\kappa d}{\mu\kappa}$,
$-a_2$, $-a_3$, $-\mu$, and $0$. 
Thus, zero is a simple eigenvalue of $J_0^*$ and,
recalling \eqref{EE_feas},
all the other eigenvalues have negative real parts. The center manifold
theory \cite{Carr} can thus be employed to assess the behavior of \eqref{ModeloColeraX} 
near $\beta=\beta^*$. The tool for studying the local asymptotic stability property
of the EE for $\beta$ near $\beta^*$ is provided by Theorem~4.1 of \cite{Castillo}.
The right and left eigenvectors associated with the
zero eigenvalue of $J_0^*$ are, respectively, 
$w=\left[
\begin{matrix}
w_1 & w_2 & w_3 & w_4 & w_5
\end{matrix}
\right]^T$ 
and $v=\left[
\begin{matrix}
v_1 & v_2 & v_3 & v_4 & v_5
\end{matrix}
\right]$, i.e., explicitly,
\begin{align*}
w&=\left[\begin{matrix}
\left(\displaystyle\frac{\delta\varepsilon\omega}{a_2a_3}
-a_1\right)\displaystyle\frac{1}{\mu} \ & \ 1 \ 
& \ \displaystyle\frac{\delta}{a_2} 
\ & \ \displaystyle\frac{\delta\varepsilon}{a_2a_3} 
\ & \ 
\displaystyle\frac{\mu\kappa\eta}{\rho\Lambda + \mu\kappa d}
\end{matrix}
\right]^Tw_2,
\quad
v=\left[\begin{matrix}
0 \ & \ 1 \ & \ 0 \ & \ 0 \ & 
\ \displaystyle\frac{a_1}{\eta}
\end{matrix}\right] v_2,
\end{align*}
with $w_2$ and $v_2$ arbitrary constants. 
So, we can choose $w_2 = v_2 = 1$. 
In view of the fact that $v_1=v_3=v_4=0$,
the only nonvanishing derivatives, in the above expressions, are
\begin{equation*}
\begin{split}
& \left[\frac{\partial^2f_2}{\partial x_1\partial x_5}\left(E^0\right)\right]_{\beta=\beta^*}
=\left[\frac{\partial^2f_2}{\partial x_5\partial x_1}\left(E^0\right)\right]_{\beta=\beta^*}
=\frac{\beta^*}{\kappa},\\
& \left[\frac{\partial^2f_2}{\partial x_5^2}\left(E^0\right)\right]_{\beta=\beta^*}
=-\frac{2\beta^*\Lambda}{\mu\kappa^2},\\
& \left[\frac{\partial^2f_5}{\partial x_1\partial x_5}\left(E^0\right)\right]_{\beta=\beta^*}
=\left[\frac{\partial^2f_5}{\partial x_5\partial x_1}\left(E^0\right)\right]_{\beta=\beta^*}
=-\frac{\rho}{\kappa},\\
& \left[\frac{\partial^2f_5}{\partial x_5^2}\left(E^0\right)\right]_{\beta=\beta^*}
=\frac{2\rho\Lambda}{\mu\kappa^2}.
\end{split}
\end{equation*}
Let us assume that $\varphi=\beta-\beta^*$.
Therefore, recalling \eqref{EE_feas} and \eqref{a1a2a3}, for constants $a$ and $b$, we find
\begin{equation*}
a=  \frac{2 \mu (\beta^* \eta - \rho a_1)}{\rho
\Lambda + \mu\kappa d}\left(\frac{\delta \varepsilon\omega 
- a_1 a_2 a_3}{a_2 a_3 \mu}-\frac{\Lambda \eta}{\rho\Lambda + \mu\kappa d}\right)
<0
\end{equation*}
and
\begin{equation*}
\begin{split}
b= &\ \sum_{i=1}^{5}\left(
v_2w_i\left[\frac{\partial^2f_2}{\partial x_i\partial\varphi}\left(E^0\right)\right]_{\beta=\beta^*}
\right)\\
= &\ v_2w_5\left[\frac{\partial}{\partial x_5}
\left(\frac{x_1x_5}{\kappa+x_5}\right)\left(E^0\right)\right]_{\beta=\beta^*} \\
=&\ \frac{\Lambda\eta}{\rho\Lambda + \mu\kappa d} >0,
\end{split}
\end{equation*}
respectively. Thus, since $\beta^*\eta > \rho a_1$, as 
\begin{equation*}
\begin{cases}
a<0\\
b>0\\
\varphi=\beta-\beta^*=
\dfrac{a_1(\rho\Lambda+\mu\kappa d)}{\Lambda\eta}(R_0-1)>0
\end{cases}
\Leftrightarrow
\begin{cases}
a<0\\
b>0\\
R_0>1,
\end{cases}
\end{equation*}
we conclude from Theorem~4.1 of \cite{Castillo}
that the equilibrium points $E^0$ and $E^*$ of \eqref{ModeloColera} 
are, respectively, unstable and locally asymptotic stable
for a value of the basic reproduction number such that $R_0>1$. 
This concludes the proof.
\hfill $\Box$
\end{proof}


\section{The Optimal Epidemic Control}
\label{sec:ocp}

In this section, we define an optimal control problem with the purpose 
to curtail the spread of the epidemic. Furthermore, we write the respective 
necessary optimality conditions, following the Pontryagin approach
\cite{Pontryagin_et_all_1962}. We keep on using the notation
$X=(x_1,x_2,x_3,x_4,x_5)=(S,I,Q,R,B)$.


\subsection{The Optimal Control Problem}
	
Cholera transmission is linked to inadequate access to clean water 
and sanitation facilities. The distribution of CWT 
for water purification is one of the possible strategies to improve 
the quality of the water and control cholera outbreaks.  
In this section, we introduce into model \eqref{ModeloColera} 
a control function $u(\cdot)$ that represents the fraction of susceptible 
individuals who has access to CWT for water purification 
(see \cite{IntMedCorps_tablets_cholera}). This control measure is such that 
$u(t)\in [0, u_{\max}]$ for all $t\in[0,T]$, where $T>0$ is the final time 
and $0 \leq u_{\max} \leq 1$. If $u = 0$, then nobody receives those chlorine 
water tablets, that is, there is no control measure. 
To assume that only a fraction of susceptible 
receive the water tablets (e.g., those living in areas that 
can be reached by health workers)  
while the rest of the population does not receive sanitary aid, 
we fix a value of $0<u_{\max} < 1$. If $u = 1$, then there is no movement
of individuals from class $S$ to class $I$, i.e., there is no new infections.
The model with control is then given by the following system 
of non-linear ordinary differential equations:
\begin{equation}
\label{ModeloColeraControlo}
\begin{cases}
\begin{split}
\dot{S}(t)=\ &\Lambda-\displaystyle\frac{\beta B(t)S(t)}{\kappa+B(t)}\Big(1-u(t)\Big )
+\omega R(t)-\mu S(t)=\tilde{f_1}\big (X(t),u(t)\big ),\\[0.3 cm]
\dot{I}(t)=\ &\displaystyle\frac{\beta B(t)S(t)}{\kappa+B(t)}\Big(1-u(t)\Big)
-(\delta +\alpha_1+\mu)I(t)=\tilde{f_2}\big (X(t),u(t)\big ),\\[0.3 cm]
\dot{Q}(t)=\ &\delta I(t)-(\varepsilon+\alpha_2+\mu)Q(t)
=\tilde{f_3}\big (X(t),u(t)\big ),\\[0.3 cm]
\dot{R}(t)=\ &\varepsilon Q(t)-(\omega+\mu)R(t)=\tilde{f_4}\big (X(t),u(t)\big ),\\[0.3 cm]
\dot{B}(t)=\ &\eta I(t)-dB(t) - \displaystyle\frac{\rho B(t)S(t)}{\kappa+B(t)}
=\tilde{f_5}\big (X(t),u(t)\big ),
\end{split}
\end{cases}
\end{equation}
together with the initial conditions given by
\begin{equation}
\label{eq:init:cond_delay_OC}
S(0) = S_0 \geq 0,\ I(0) = I_0 \geq 0,\ Q(0)= Q_0 \geq 0,\ 
R(0) = R_0 \geq 0\ \textrm{and}\ B(0) = B_0 \geq 0. 
\end{equation}
The set $\mathcal{X}$ of admissible trajectories 
and the admissible control set $\mathcal{U}$ are, respectively, given by
\begin{eqnarray*}
\mathcal{X} = \left\{X(\cdot) \in W^{1,1}\left([0,T];\R^5\right) :   
\eqref{ModeloColeraControlo} \textrm{ and } \eqref{eq:init:cond_delay_OC} 
\textrm{ are satisfied}\right\},\\
\mathcal{U} = \big \{ u(\cdot) \in L^1\left([0, T]; \mathbb{R}\big ) 
: 0 \leq u (t) \leq u_{\max},  \, \forall \, t \in [0, T] \, \right\},
\end{eqnarray*}
where $0 \leq u_{\max} \leq 1$. 
The functional to be minimized is represented by
\begin{equation}
\label{costfunction}
J\big (X(\cdot),u(\cdot)\big ) = 
\int_{0}^{T}\frac{\beta B(t) S(t)}{\kappa + B(t)}(1-u(t))dt+c\int_{0}^{T}u(t)dt,
\end{equation}
i.e., the total number of new infections over the period, plus total cost of interventions,
where $c$ is a weight coefficient.
Clearly, one would like to eradicate
the epidemic at the least possible cost. Thus, the optimal control 
problem consists of determining the vector function
$$
X^\diamond(\cdot) 
= 
\big(x_1^\diamond(\cdot), x_2^\diamond(\cdot), x_3^\diamond(\cdot), x_4^\diamond(\cdot), x_5^\diamond(\cdot) \big)
=
\big(S^\diamond(\cdot), I^\diamond(\cdot), Q^\diamond(\cdot), R^\diamond(\cdot), B^\diamond(\cdot) \big)
\in \mathcal{X}
$$ 
associated with an admissible control $u^\diamond(\cdot) \in \mathcal{U}$ 
on the time interval $[0, T]$, that provides the minimal value 
to the cost functional \eqref{costfunction}, \textrm{i.e.},
\begin{equation}
\label{mincostfunct}
J\big(X^\diamond(\cdot),u^\diamond(\cdot)\big) 
= \min_{(X(\cdot),u(\cdot))\in\mathcal{X}\times\mathcal{U}} 
J\left(X(\cdot),u(\cdot)\right).
\end{equation}


\subsection{Necessary Optimality Conditions}

The following theorem provides necessary optimality conditions for
the optimal control problem \eqref{ModeloColeraControlo}--\eqref{mincostfunct}, 
assuming existence of solution.

\begin{theorem}
\label{Cap7:theo:nc}	
Assume that 
$
X^\diamond(\cdot) = 
\big(x_1^\diamond(\cdot), x_2^\diamond(\cdot), x_3^\diamond(\cdot), 
x_4^\diamond(\cdot), x_5^\diamond(\cdot) \big)\in\mathcal{X}
$
is an optimal state associated with the optimal control 
$u^\diamond(\cdot)\in\mathcal{U}$
of problem \eqref{ModeloColeraControlo}--\eqref{mincostfunct} 
with fixed final time $T\in\mathbb{R}_+$.
Then, there is a multiplier function 
$\lambda^\diamond = 
\left(\lambda_1^\diamond, \lambda_2^\diamond, \lambda_3^\diamond, 
\lambda_4^\diamond, \lambda_5^\diamond\right) 
: [0, T] \to \mathbb{R}^5$  
that satisfies the adjoint system
\begin{equation}
\label{adjoint_function:Ezio}
\begin{cases}
\begin{split}
\dot{\lambda}_1^\diamond(t)
=& \frac{x_5^\diamond(t)}{\kappa+x_5^\diamond(t)}
\Big(\beta \big[\lambda_1^\diamond(t)-\lambda_2^\diamond(t)-1\big]\big[1-u(t)\big] 
+ \rho \lambda_5^\diamond(t)\Big )+\mu\lambda_1^\diamond(t),\\[0.2 cm]
\dot{\lambda}_2^\diamond(t)
=& \Big (\delta + \alpha_1 + \mu \Big )\lambda_2^\diamond(t) 
- \delta\lambda_3^\diamond(t) 
- \eta\lambda_5^\diamond(t),\\[0.2 cm]
\dot{\lambda}_3^\diamond(t)
=& \Big (\varepsilon+\alpha_2+\mu\Big )\lambda_3^\diamond(t) 
-\varepsilon\lambda_4^\diamond(t),\\[0.2 cm]
\dot{\lambda}_4^\diamond(t)
=& -\omega\lambda_1^\diamond(t) 
+ \Big(\omega+\mu\Big)\lambda_4^\diamond(t),\\[0.2 cm]
\dot{\lambda}_5^\diamond(t)
=& \frac{\kappa x_1^\diamond(t)}{(\kappa+x_5^\diamond(t))^2}\Big(
\beta \big [\lambda_1^\diamond(t)-\lambda_2^\diamond(t)-1\big ]\big [1-u(t)\big] 
+ \rho \lambda_5^\diamond(t)\Big) + d\lambda_5^\diamond(t),
\end{split}
\end{cases}
\end{equation}
with transversality conditions 
\begin{equation}
\label{eq:TC:Ezio}
\lambda_i^\diamond(T)=0, \quad i=1,\ldots,5,
\end{equation}
for almost all $t\in[0,T]$.
Moreover, the control law is characterized by
\begin{equation}
\label{control_law:Ezio}
u^\diamond(t)=
\begin{cases}
\begin{split}
& u_{\max}, 	 \ && \text{if} \quad \phi(t)<0,\\
& 0, 			 \ && \text{if} \quad \phi(t)>0,\\
&\text{singular},\ &&\text{if} \quad \phi(t)=0\ \text{on}\ I_s\subset[0,T],
\end{split}
\end{cases}
\end{equation}
where $\phi$ is the switching function defined by 
\begin{equation}
\label{switching-function:Ezio}
\phi(t)=
c+\frac{\beta x_1^\diamond(t)x_5^\diamond(t)}{\kappa
+ x_5^\diamond(t)}\Big(\lambda_1^\diamond(t)-\lambda_2^\diamond(t)-1\Big)
\end{equation}
for almost all $t\in[0,T]$.
\end{theorem}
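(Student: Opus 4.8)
The plan is to apply the Pontryagin Maximum Principle (PMP) in its minimization form \cite{Pontryagin_et_all_1962}, which is legitimate here because the existence of an optimal pair is assumed, the right-hand sides $\tilde f_i$ are continuously differentiable in $(X,u)$ on the bounded invariant region $\Omega$ of \eqref{eq:Omega} (where $\kappa+x_5\geq\kappa>0$ keeps the incidence term smooth), and the horizon $T$ is fixed with free terminal state and no terminal payoff. First I would introduce the Hamiltonian obtained by adjoining the dynamics \eqref{ModeloColeraControlo} to the integrand of the cost \eqref{costfunction} through the multiplier $\lambda=(\lambda_1,\dots,\lambda_5)$:
\begin{equation*}
H(X,u,\lambda)=\frac{\beta x_5 x_1}{\kappa+x_5}(1-u)+cu+\sum_{i=1}^{5}\lambda_i\,\tilde f_i(X,u).
\end{equation*}
The three ingredients of the PMP then yield, in order, the three blocks of the statement.

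Next I would derive the adjoint system \eqref{adjoint_function:Ezio} from $\dot\lambda_i^\diamond=-\partial H/\partial x_i$, evaluated along the optimal pair $(X^\diamond,u^\diamond)$. The only nonroutine derivatives are those of the saturating incidence term: using $\partial_{x_1}\!\big(x_5 x_1/(\kappa+x_5)\big)=x_5/(\kappa+x_5)$ and $\partial_{x_5}\!\big(x_5 x_1/(\kappa+x_5)\big)=\kappa x_1/(\kappa+x_5)^2$, the contributions of $\tilde f_1$, $\tilde f_2$ and $\tilde f_5$ combine into the common factor $\beta[\lambda_1^\diamond-\lambda_2^\diamond-1](1-u)+\rho\lambda_5^\diamond$ that multiplies $x_5^\diamond/(\kappa+x_5^\diamond)$ in $\dot\lambda_1^\diamond$ and $\kappa x_1^\diamond/(\kappa+x_5^\diamond)^2$ in $\dot\lambda_5^\diamond$; the equations for $\dot\lambda_2^\diamond,\dot\lambda_3^\diamond,\dot\lambda_4^\diamond$ are immediate, since $x_2,x_3,x_4$ enter the dynamics only linearly. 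The transversality conditions \eqref{eq:TC:Ezio} are the standard ones for a fixed horizon with unconstrained terminal state, $\lambda_i^\diamond(T)=0$.

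Finally I would establish the control characterization \eqref{control_law:Ezio}. The PMP requires $u^\diamond(t)$ to minimize $u\mapsto H(X^\diamond,u,\lambda^\diamond)$ over $[0,u_{\max}]$ for almost every $t$. A direct computation gives $\partial H/\partial u=c+\frac{\beta x_1^\diamond x_5^\diamond}{\kappa+x_5^\diamond}(\lambda_1^\diamond-\lambda_2^\diamond-1)$, which is precisely the switching function $\phi$ of \eqref{switching-function:Ezio}. Because $H$ is affine in $u$ with slope $\phi(t)$, its minimizer on $[0,u_{\max}]$ is the boundary value $u_{\max}$ where $\phi(t)<0$ and $0$ where $\phi(t)>0$; on any subinterval $I_s$ where $\phi$ vanishes identically the minimization condition is noninformative and the control is singular, which is exactly \eqref{control_law:Ezio}.

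The computations are essentially bookkeeping; I do not anticipate a genuine obstacle. The single point deserving emphasis is the affine dependence of $H$ on $u$: this is what replaces the usual interior stationarity condition $\partial H/\partial u=0$ by the bang-bang-plus-singular dichotomy, and it is also what makes the switching function $\phi$, rather than a regularized minimizer, the decisive object. (The subsequent strict bang-bang conclusion advertised in the introduction would then require separately ruling out a singular arc on a set of positive measure, but that lies beyond the present statement.)
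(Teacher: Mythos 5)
Your proposal is correct and follows essentially the same route as the paper: applying the Pontryagin Minimum Principle with the same Hamiltonian, obtaining the adjoint system from $\dot{\lambda}_i^\diamond=-\partial H/\partial x_i$, the transversality conditions from the free terminal state, and the bang-bang/singular control law from the sign of $\phi=\partial H/\partial u$ since $H$ is affine in $u$. Your added remarks (explicit derivatives of the saturating incidence term, smoothness on the invariant region, and the observation that the strict bang-bang property is a separate matter) are consistent with, and slightly more detailed than, the paper's own argument.
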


\begin{proof}
Necessary optimality conditions for 
\eqref{ModeloColeraControlo}--\eqref{mincostfunct} are given by 
Pontryagin's Minimum Principle of optimal control 
(see \cite{Pontryagin_et_all_1962}).
The Hamiltonian function is defined by
\begin{equation}
\label{Hamiltonian:Ezio}
H(X, u, \lambda) = 
\frac{\beta x_1 x_5}{\kappa + x_5}(1-u)+c u
+ \sum_{i=1}^{5} \lambda_i \tilde{f}_i(X,u).
\end{equation}
Let us suppose that 
$\big (X^\diamond(\cdot),u^\diamond(\cdot)\big )\in\mathcal{X}\times\mathcal{U}$ 
is an optimal solution of \eqref{ModeloColeraControlo}--\eqref{mincostfunct} 
with fixed final time $T\in\mathbb{R}_+$.
Then, 
there is an adjoint function 
$\lambda^\diamond 
= \big (\lambda_1^\diamond, \lambda_2^\diamond, \lambda_3^\diamond, 
\lambda_4^\diamond, \lambda_5^\diamond\big ) 
: [0, T] \to \mathbb{R}^5$, 
$\lambda^\diamond(\cdot) \in W^{1,1}\big ([0,T];\R^5\big )$, 
that satisfies, for almost all $t \in [0,T]$, the
\begin{enumerate}[1)]
\item transversality conditions:
\begin{equation}
\label{eq:TC:PMP:Ezio}
\lambda_i^\diamond(T) = 0, \quad i =1,\ldots, 5,
\end{equation}
in view of the free terminal state $X(T)$;
		
\item adjoint system:
\begin{equation}
\label{adjsystemPMP:Ezio}
\dot{\lambda}_i^\diamond(t)=
-\frac{\partial H}{\partial x_i}\big (X^\diamond(t),u^\diamond(t),\lambda^\diamond(t)\big ), 
\quad i =1,\ldots, 5;
\end{equation}
		
\item minimality condition:
\begin{equation}
\label{maxcondPMP:Ezio}
\min_{0 \leq u \leq u_{\max}}
H\big (X^\diamond(t), u, \lambda^\diamond(t)\big )
=
H\big (X^\diamond(t), u^\diamond(t), \lambda^\diamond(t)\big ),
\end{equation}
where $0 \leq u_{\max} \leq 1$
\end{enumerate}
So, conditions \eqref{eq:TC:Ezio} are derived from transversality conditions 
\eqref{eq:TC:PMP:Ezio}.
Moreover, system~\eqref{adjoint_function:Ezio} is obtained from 
adjoint system~\eqref{adjsystemPMP:Ezio}.
Let us evaluate the minimality condition \eqref{maxcondPMP:Ezio}. 
The Hamiltonian \eqref{Hamiltonian:Ezio} is linear in the control variable. 
Hence, the minimizer control is determined by the sign of the switching function
\begin{equation*}
\phi(t)= \frac{\partial H}{\partial u}\big (X^\diamond(t), u(t), \lambda^\diamond(t)\big )
= c+\frac{\beta x_1^\diamond(t)x_5^\diamond(t)}{\kappa
+ x_5^\diamond(t)}\Big(\lambda_1^\diamond(t)-\lambda_2^\diamond(t)-1\Big)
\end{equation*}
(see \eqref{switching-function:Ezio}) as follows:
\begin{equation*}
u^\diamond(t)=
\begin{cases}
\begin{split}
& u_{\max},        \ &&\text{if} \quad \phi(t)<0,\\
& 0,              \ &&\text{if} \quad \phi(t)>0,\\
& \text{singular},\ &&\text{if} \quad \phi(t)=0\ \text{on}\ I_s\subset[0,T].
\end{split}
\end{cases}
\end{equation*}
This concludes the proof.
\hfill $\Box$
\end{proof}

\begin{remark}
If the switching function has only finitely many isolated zeros 
in an interval $I_b \subset [0,T]$, then the
control $u^\diamond $ is called \textit{bang-bang} on $I_b$. 
The case of a \textit{singular control},
where $\phi(t) = 0$ on $I_s \subset [0,T]$, was not
discussed in the proof of Theorem~\ref{Cap7:theo:nc}, 
since in our computations we never encountered singular controls. 
\end{remark}


\section{Numerical Simulations and Discussion}
\label{sec:num:simu}

In this section, we show that the control measure defined 
in Section~\ref{sec:ocp} could have stopped more quickly 
the worst cholera outbreak that ever occurred in human history, 
which began in Yemen on April 27th, 2017. We consider 
the real data of the number of infective individuals in Yemen, 
from April 27th, 2017 to April 15th, 2018 \cite{WHO}, represented 
in Figure~\ref{fig:Yemen}. In this period, the 
maximum number of infective individuals was $51\ 000$.

\begin{remark}
The WHO tables report the number of cases per week, and these are not the same quantities
as our curve $I(t)$. There is, however, a (partial) justification for the comparison
carried out in Figure~\ref{fig:Yemen}, since $\delta$ (the quarantine rate) 
is larger than most other rates in the model (see Table~\ref{Tab_Parameter}). Indeed,
one may make a quasi-equilibrium assumption about $\dot{I}$, obtaining
$$
I(t) \approx \frac{C(t)}{\delta + \alpha_1+ \mu} \approx \frac{C(t)}{1.15},
$$
where $C(t)$ is the rate of new infections. Thus the number of cases per 
day should be close (though not equal) to $I(t)$.
\end{remark}

In order to better simulate real life situations, where there 
is a lack of resources needed to distribute CWT for water 
purification, we consider three situations: 
\begin{itemize}
\item low resources ($u_{\max}=0.20$); 
\item enough resources (two cases considered: 
$u_{\max}=0.55$ and $u_{\max}=0.90$); 
\item abundance of resources ($u_{\max}=0.95$). 
\end{itemize}

In the current section, we also observe numerically the local asymptotic stability of 
the endemic equilibrium, when we consider all values of Table~\ref{Tab_Parameter} and 
$T=5\times10^5$ days.

In Subsection~\ref{subsec:OC_low}, we consider low resources for CWT 
distribution ($u_{\max}=0.20$). It means that only a small percentage ($20\%$) 
of susceptible individuals has access to the CWT. In Subsection~\ref{subsec:OC_enough}, 
we consider sufficient enough resources
to decrease the outbreak (cases $u_{\max}=0.55$ and $u_{\max}=0.90$). 
Finally, in Subsection~\ref{subsec:OC_abundance}, we present numerical simulations 
with abundance of resources, that is, $u_{\max}=0.95$, for which almost all susceptible population 
has access to pure water. In all simulations, the parameters of model \eqref{ModeloColeraControlo} 
and initial conditions \eqref{eq:init:cond_delay_OC} are fixed at the values of
Table~\ref{Tab_Parameter}. Note that the initial values of the population 
and of the bacterial concentration satisfy \eqref{eq:Omega}, that is,
$$
N_0=S_0+I_0+Q_0+R_0\in\left[0,\frac{\Lambda}{\mu}\right] 
\quad \textrm{and} \quad B_0\in\left[0,\frac{\Lambda\eta}{\mu d}\right].
$$
Therefore, $(S_0,I_0,Q_0,R_0,B_0)\in\Omega=\Omega_H\times\Omega_B$. 
This implies that all the following numerical solutions $(S,I,Q,R,B)$ 
belong to the positively invariant set $\Omega=\Omega_H\times\Omega_B$ 
(see Subsection~\ref{subsec:preliminary_results} 
and Lemmas~1 and 2 of \cite{LPST}). In all the simulations,
we have assigned the value one to the cost weight associated with CWT distribution
($c = 1$). Note that we keep the same values for the parameters as the ones found 
in \cite{LPaiaoSilvaTorresYemen2018}. The only parameter that makes sense to change 
is the ingestion rate $\beta$, which is related with the novelty in the new model
we propose here. This value of $\beta$ was chosen in order to minimize the distance 
between real data and the curve of infected individuals 
predicted by model \eqref{ModeloColera}.
\begin{figure}[!htb]
\centering
\includegraphics[scale=0.47]{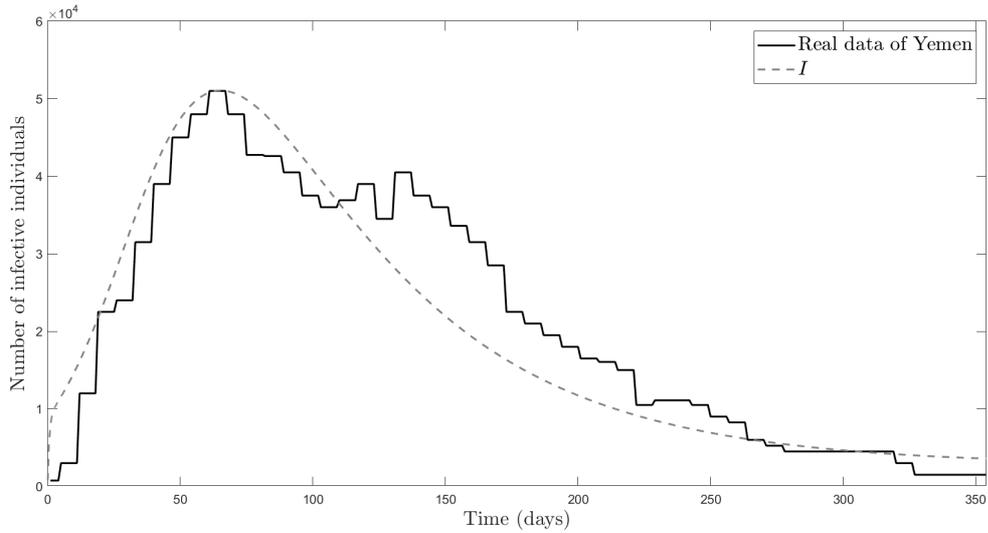}
\begin{minipage}{0.90\textwidth}
\caption{\small Number of infective individuals with cholera, per week, 
in Yemen, from 27 April 2017 to 15 April 2018 (see \cite{WHO_15abril2018}) 
versus state trajectory $I(t)$ for all $t \in [0, 354]$, predicted by
model \eqref{ModeloColera}, assuming all values of Table~\ref{Tab_Parameter}.}
\label{fig:Yemen}
\end{minipage}
\end{figure}
\begin{table}[ht]
\begin{minipage}{0.88\textwidth}
\caption{\small Parameter values and initial conditions 
for the optimal control problem \eqref{ModeloColeraControlo}--\eqref{mincostfunct}.}
\label{Tab_Parameter}
\end{minipage}
\begin{center}
\begin{tabular}[center]{|l|l|l|l|} \hline
\textbf{Parameter} & \textbf{Description} & \textbf{Value} & \textbf{Reference}\\ \hline \hline
\small $\Lambda$ & \small Recruitment rate & \small 28.4$N(0)$/365000 (person day$^{-1}$) & \small \cite{BirthRate}\\
\small $\mu$ & \small Natural death rate & \small 1.6$\times10^{-5}$ (day$^{-1}$) & \small \cite{DeathRate}\\
\small $\beta$ & \small Ingestion rate & \small 0.01891 (day$^{-1}$)  & \small Hypothetical\\
\small $\kappa$ & \small Half saturation constant & \small$10^7$ (cell/ml) & \small Hypothetical\\
\small $\omega$ & \small Immunity waning rate & \small 0.4/365 (day$^{-1}$) & \small \cite{Neilan}\\
\small $\delta$ & \small Quarantine rate & \small 1.15 (day$^{-1}$) & \small Hypothetical\\
\small $\varepsilon$ & \small Recovery rate & \small 0.2 (day$^{-1}$) & \small \cite{Mwasa}\\
\small $\alpha_1$ & \small Death rate (infected) & \small 6$\times10^{-6}$ (day$^{-1}$) & \small \cite{DeathRate,WHO}\\
\small $\alpha_2$ & \small Death rate (quarantined)& \small 3$\times10^{-6}$ (day$^{-1}$) & \small Hypothetical\\
\small $\eta$ & \small Shedding rate (infected) & \small 10 (cell/ml day$^{-1}$ person$^{-1}$) & \small \cite{Capone}\\
\small $d$ & \small Bacteria death rate & \small0.33 (day$^{-1}$) & \small \cite{Capone}\\
\small $\rho$ & \small Contact rate & \small 0.01891 (cell/ml day$^{-1}$ person$^{-1}$)  
& \small Hypothetical\\
\small $S(0)=S_0$ & \small Susceptible individuals at $t=0$ & \small 28 249 670 (person) & \small \cite{Yemen_pop} \\
\small $I(0)=I_0$ & \small Infected individuals at $t=0$ & \small 750 (person) & \small \cite{WHO}\\
\small $Q(0)=Q_0$ & \small Quarantined individuals at $t=0$ & \small 0 (person) & \small Hypothetical \\
\small $R(0)=R_0$ & \small Recovered individuals at $t=0$ & \small 0 (person) & \small Hypothetical \\
\small $B(0)=B_0$ & \small Bacterial concentration at $t=0$ & \small$275\times10^3$ (cell/ml) & \small Hypothetical\\
\hline
\end{tabular}
\end{center}
\end{table}


\subsection{Optimal Solution in Case of Low Resources}
\label{subsec:OC_low}

We start by assuming that $u_{\max}=0.20$, that is, the maximum percentage 
of susceptible individuals that have access to the CWT is $20\%$. 
As we consider that $t_f = 354$ days, then the number of grid points is
$N = 100 \times t_f = 35400$. The numerical simulations for the control
are in agreement with Theorem~\ref{Cap7:theo:nc}. Note that
the values of the control $u$ 
decrease to zero at $t_f = 354$ (see bottom right plot of Figure~\ref{fig:X_u_02}). 
However, all resources are being used during almost all the time period 
considered in the simulation (354 days).
The state trajectories associated with the extremal control are plotted 
in Figure~\ref{fig:X_u_02}. 
\begin{figure}[!htb]
\begin{center}
\subfloat
{\includegraphics[scale=0.36]{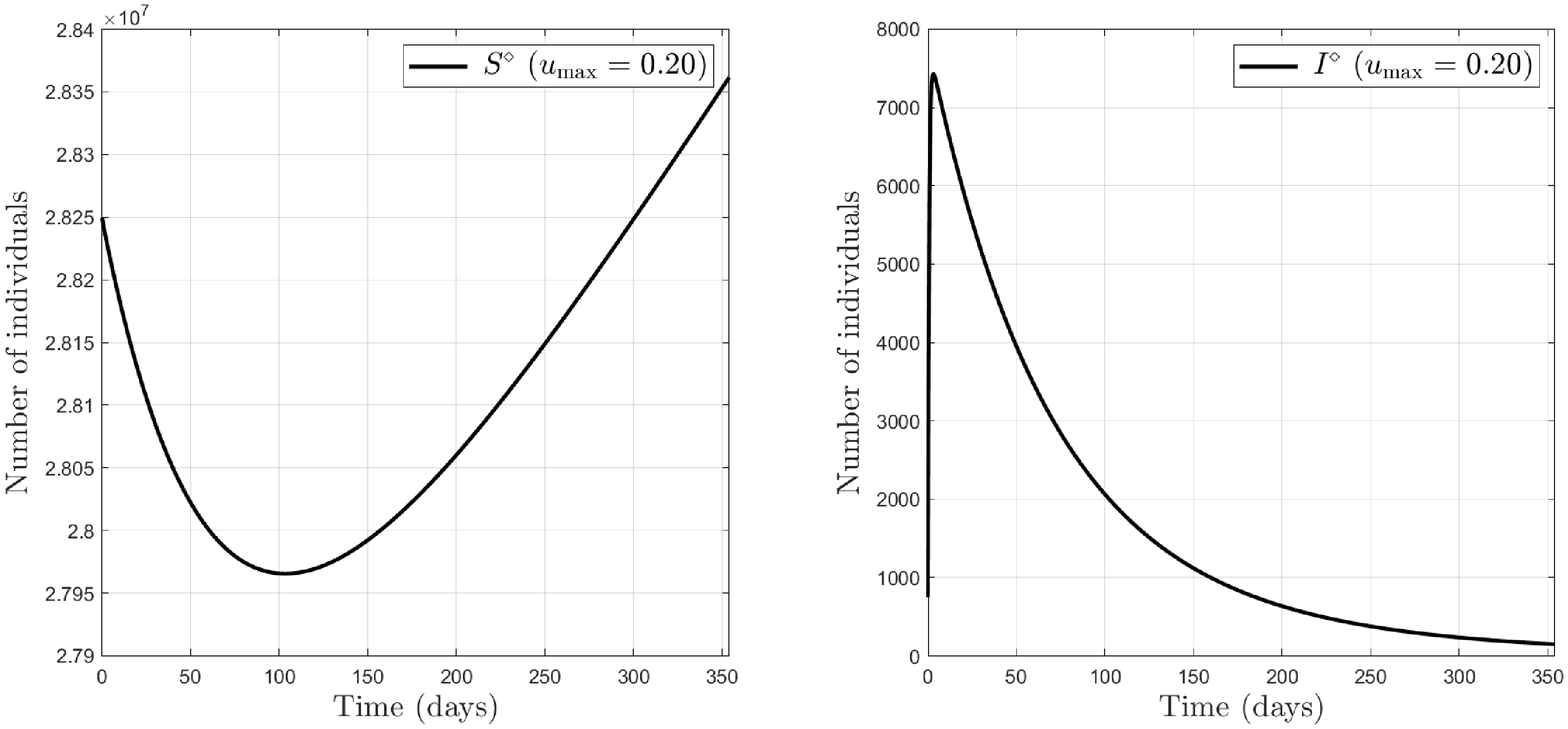}}
\qquad
\subfloat
{\includegraphics[scale=0.36]{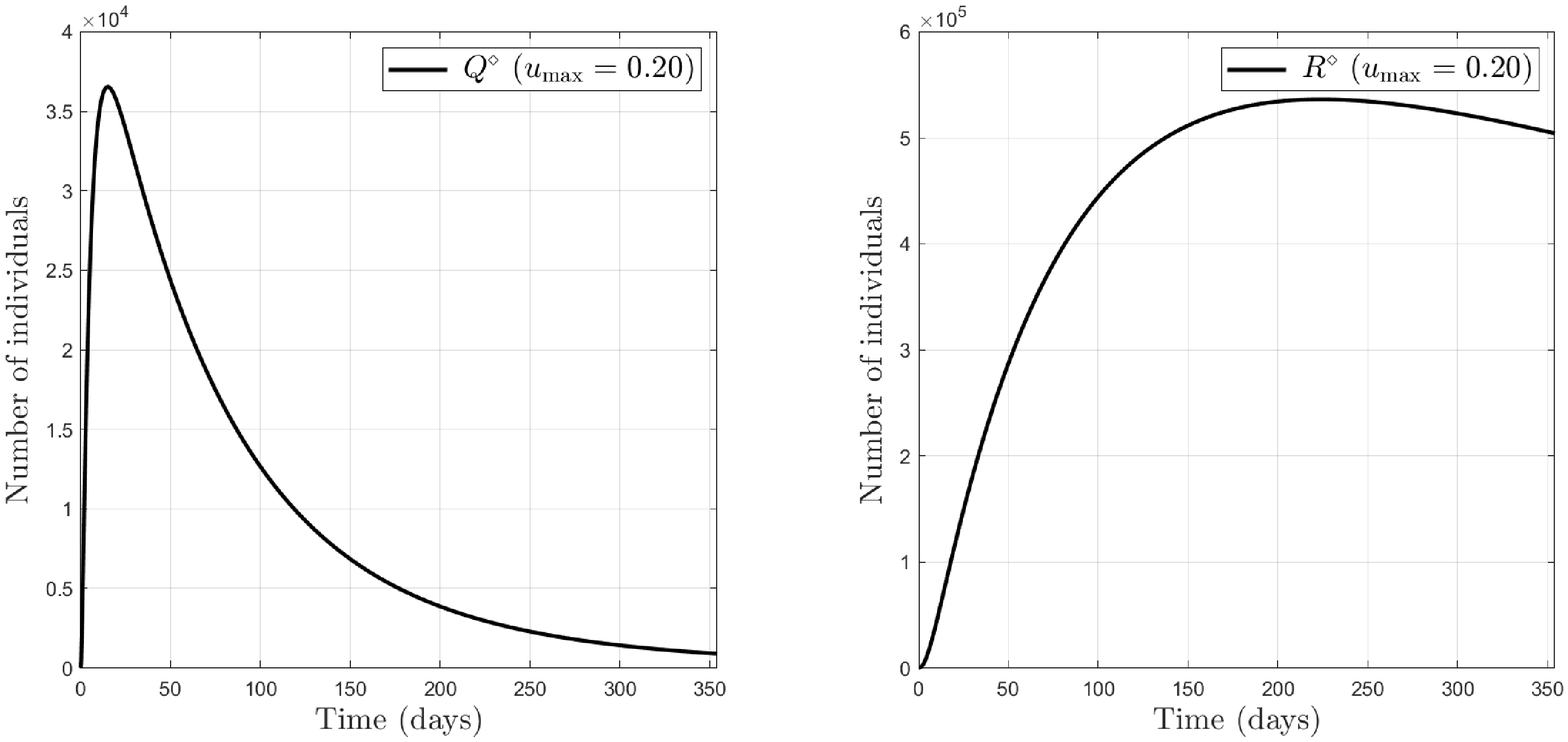}}
\qquad
\subfloat
{\includegraphics[scale=0.36]{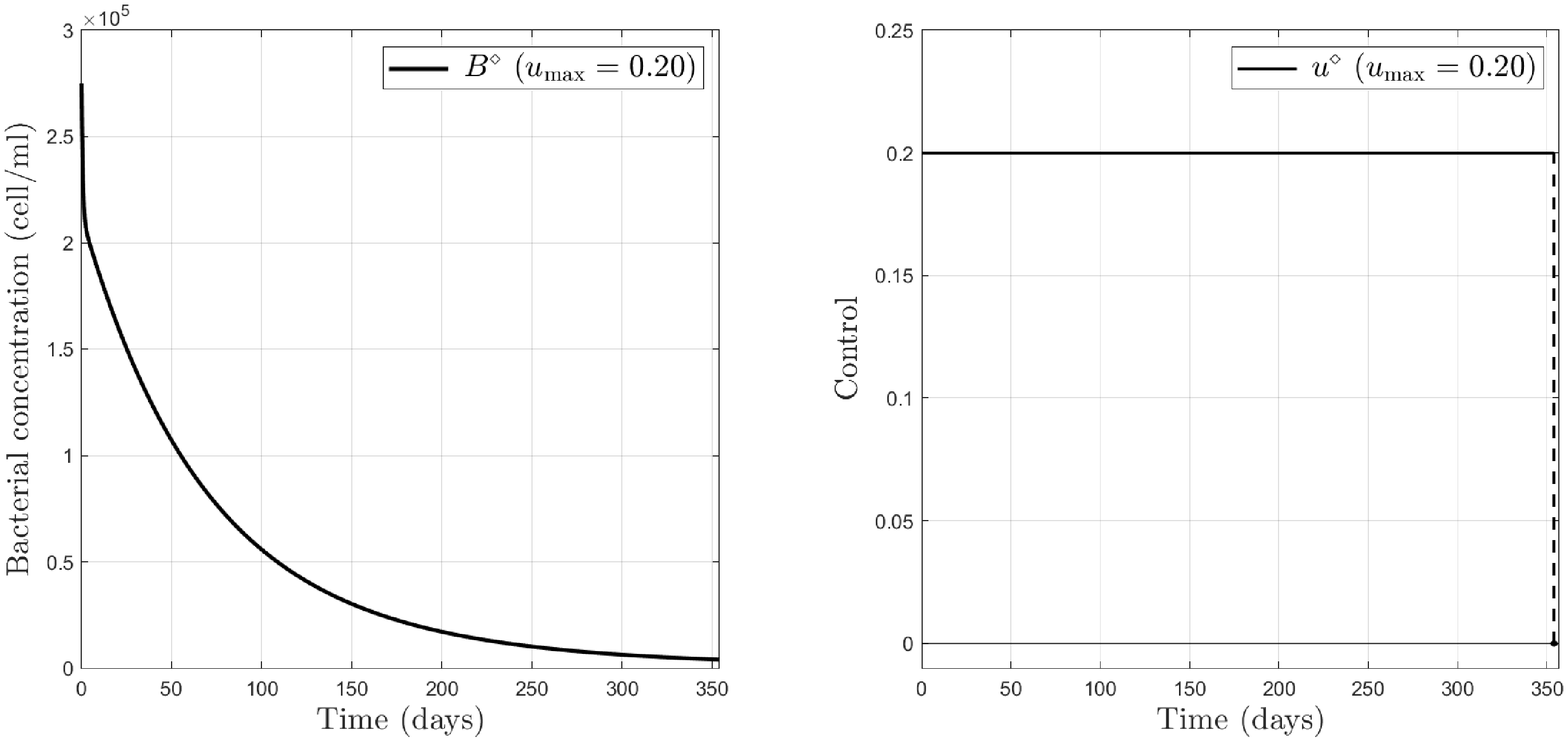}}
\end{center}
\begin{center}
\begin{minipage}{0.7\textwidth}
\caption{\small Extremal state trajectories 
$S^\diamond(t)$, $I^\diamond(t)$, $Q^\diamond(t)$, $R^\diamond(t)$ and $B^\diamond(t)$ 
and extremal control $u^\diamond(t)$ (satisfying the control law 
\eqref{control_law:Ezio}) associated with optimal control problem 
\eqref{mincostfunct} for all $t\in[0,354]$ and $u_{\max}=0.20$, 
using all the other values of Table~\ref{Tab_Parameter}.}
\label{fig:X_u_02}
\end{minipage}
\end{center}
\end{figure}
From this last figure, we observe that 
although $B$ is a strictly decreasing function, $I$ is not. 
Although the resources are insufficient 
to eradicate the disease, in the considered time interval, the distribution of
CWT to 20\% of the susceptible population is enough to improve the real situation represented 
in Figure~\ref{fig:Yemen}, decreasing significantly the maximum number 
of infective individuals. Note that the real maximum number of infective
individuals was $51\ 000$ and the one associated with $u_{\max}=0.20$ 
is approximately equal to $7\ 431$, an important improvement.


\subsection{Optimal Solution in Case of Sufficient Resources}
\label{subsec:OC_enough}

As we may deduce from the previous Subsection~\ref{subsec:OC_low}, 
to curtail the spread of the epidemic more quickly and in a better way, 
we need to consider larger values for $u_{\max}$. Now, let us take
$u_{\max}=0.55$ and $u_{\max}=0.90$. In the first case, a little 
bit more than half of susceptible individuals receives the chlorine 
water tablets ($u_{\max}=55\%$). In the second one, only 10\% of 
the susceptible population does not have access to CWT ($u_{\max}=90\%$).
	
Even considering these larger values for $u_{\max}$, the solution 
of infective individuals does not become a strictly decreasing function: 
neither when $u_{\max}=0.55$ nor when $u_{\max}=0.90$. Nevertheless, 
the maximum value of infective individuals decreases significantly 
with respect to the one obtained in Subsection~\ref{subsec:OC_low}. 
Here this value is approximately equal to $3\ 749$ and $942$ 
for $u_{\max}=0.55$ and $u_{\max}=0.90$, respectively 
(see Table~\ref{Tab_Parameter_comparision}).
	
The extremal control is bang-bang for both values of $u_{\max}$. 
We need approximately $82$ days to solve the epidemic, 
when $u_{\max}=0.55$. Thus, at the end of approximately twelve 
weeks, the supply of CWT to 
susceptible population can be discontinued, because the control decreases to zero.
As we expected, one needs less time to curtail the spread of the epidemic when we consider 
$u_{\max}=0.90$: at the end of approximately $40$ days, the control 
decreases to zero and the disease is eradicated (see Table~\ref{Tab_Parameter_comparision}).

The Pontryagin Maximum Principle is a first order necessary 
optimality condition. Therefore, the control law given by 
\eqref{control_law:Ezio} is just an extremal of the optimal control 
problem \eqref{ModeloColeraControlo}--\eqref{mincostfunct}. 
However, a stronger condition, the so-called 
strict bang-bang property proved in \cite{Osmolovskii}, 
is also satisfied for $u_{\max}=0.55$ and $u_{\max}=0.90$, 
that is, the bang-bang control and the switching function 
match the following switching conditions:
\begin{equation*}
\begin{split}
&\phi_{55}(t)<0,\ \textrm{if}\ 0\leq t <t_s^{55},\\
&\dot{\phi}_{55}(t_s^{55}) \simeq 0.228049 > 0,\\
&\phi_{55}(t)>0,\ \textrm{if}\ t_s^{55} < t \leq 100,
\end{split}
\quad\textrm{and}\quad\quad
\begin{split}
&\phi_{90}(t)<0,\ \textrm{if}\ 0\leq t <t_s^{90},\\
&\dot{\phi}_{90}(t_s^{90}) \simeq 0.389516 > 0,\\
&\phi_{90}(t)>0,\ \textrm{if}\ t_s^{90} < t \leq 70,
\end{split}
\end{equation*}
where $t_s^p$ and $\phi_p$ denote, respectively, the switching time and switching function $\phi$ 
for $u_{\max}=\frac{p}{100}$. Moreover, the respective minimum costs are given by
$$
J_{55}\simeq 3.797326 \times 10^4 \quad\textrm{and}
\quad J_{90}\simeq 4.443881\times 10^3.
$$


\subsection{Optimal Solution in Case of Abundance of Resources}
\label{subsec:OC_abundance}

In this subsection, we consider $u_{\max}=0.95$, that is, $95\%$ 
of susceptible population has access to CWT for water purification, 
corresponding to a situation where there is abundance of resources. 
In this case, the numerical solution for the number of infective 
individuals $I$ is a strictly decreasing function. In this situation, there is a timely 
and effective distribution of CWT, which avoids the increase of the number
of infected individuals. Consequently, it is possible to achieve 
a low maximum value of infected individuals equal to $I_0=750$. 
	
When $u_{\max}=0.95$, we only need to distribute CWT in the 
first $37$ days. 
The minimum cost takes the value $J_{95}\simeq 2.099780\times10^3$
and the extremal control is also bang-bang for $u_{\max}=0.95$.
As in Subsection~\ref{subsec:OC_enough}, the bang-bang 
control and the switching function match the switching condition 
\eqref{control_law:Ezio} and satisfy the strict bang-bang property 
with respect to the Pontryagin Maximum Principle \cite{Osmolovskii}:
\begin{equation*}
\begin{split}
\phi_{95}(t)<0,\ \textrm{if}\ 0\leq t <t_s^{95},\\
\dot{\phi}_{95}(t_s^{95}) \simeq 0.418741 > 0,\\
\phi_{95}(t)>0,\ \textrm{if}\ t_s^{95} < t \leq 70,
\end{split}
\end{equation*}
where $t_s^{95}$ and $\phi_{95}$ denote, respectively, the switching time and switching function $\phi$ 
for $u_{\max}=\frac{95}{100}$.
We compare the switching times and the total number of infected individuals 
associated with Subsections~\ref{subsec:OC_low}, \ref{subsec:OC_enough} and 
\ref{subsec:OC_abundance} in Table~\ref{Tab_Parameter_comparision}.
\begin{table}[ht]
\begin{minipage}{0.78\textwidth}
\caption{\small Switching time and total number of infected individuals for all considered cases.}
\label{Tab_Parameter_comparision}
\end{minipage}
\begin{center}
\begin{tabular}[center]{|c|c|c|c|c|} \hline
& $u_{\max}=0.2$ & $u_{\max}=0.55$ & $u_{\max}=0.9$ & $u_{\max}=0.95$ \\ \hline \hline
\small Switching time (days) & \small $353.99$ & \small $81.91$ & \small $40.24$ & \small $37.35$ \\
\small Total number of infected individuals & \small $7\ 431$ & \small $3\ 749$ & \small $942$ & \small $750$\\ \hline
\end{tabular}
\end{center}
\end{table}

\subsection{Local Asymptotic Stability of the Endemic Equilibrium}
\label{subsec:stability:EE}

For the parameter values in Table~\ref{Tab_Parameter}, we have that the basic reproduction
number \eqref{R0} is
$$R_0\simeq 3.830175$$
and that the endemic equilibrium \eqref{EndemicEquilibrium} is, approximately,
\begin{equation}
\label{EE:NSimulation}
E^*=\left(
2.900036\times10^7,\ 
1.039755\times10^5,\ 
5.978021\times10^5,\ 
1.075290\times10^8,\ 
2.788426\times10^6\ 
\right).
\end{equation}
Plotting the state trajectories predicted by model~\eqref{ModeloColera} for the values of 
Table~\ref{Tab_Parameter}, we can observe, numerically, the local asymptotic stability of the 
endemic equilibrium \eqref{EE:NSimulation}: see Figure~\ref{Fig:EE:stab}.
\begin{figure}[!htb]
\begin{center}
\subfloat{
\includegraphics[scale=0.36]{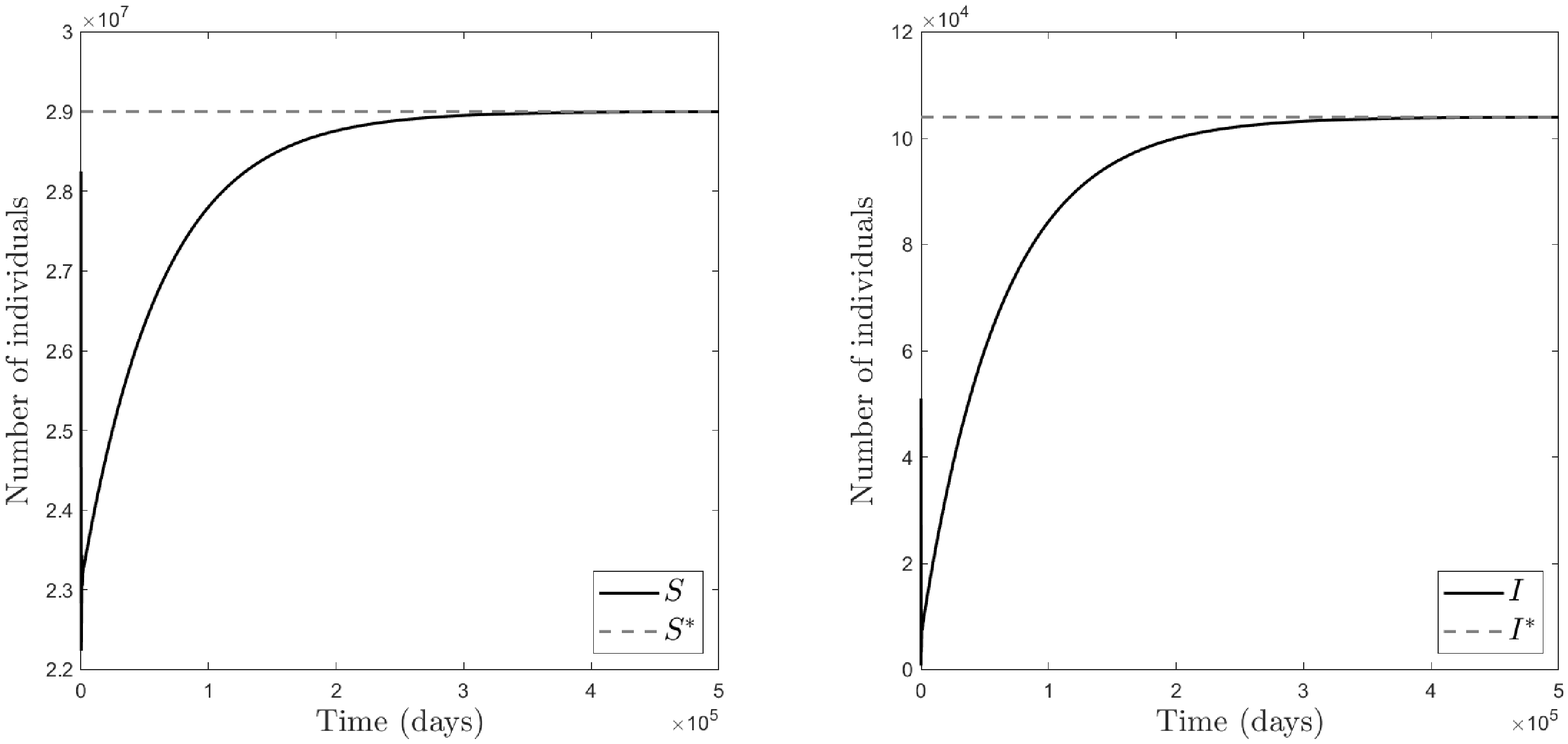}}
\qquad
\subfloat{
\includegraphics[scale=0.36]{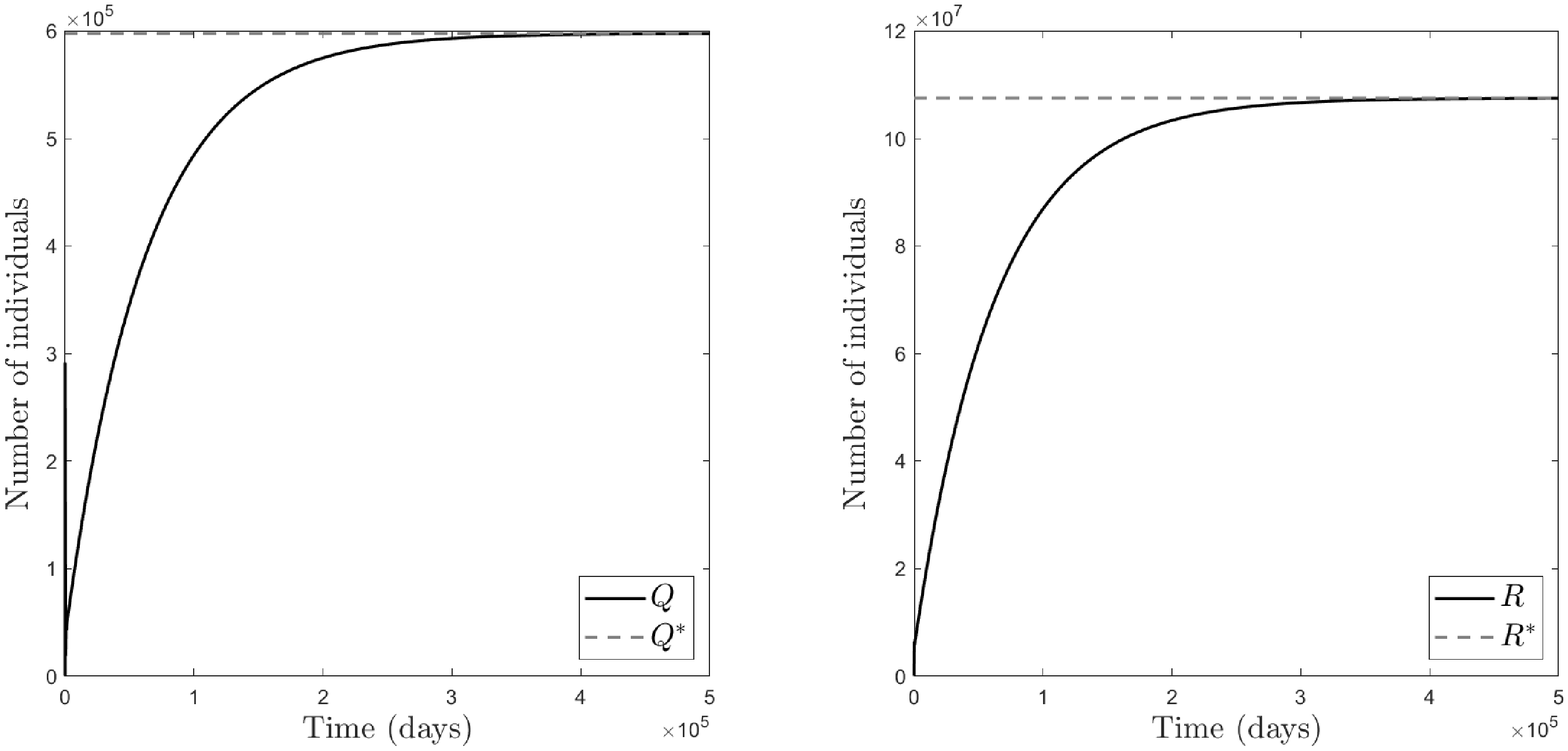}}
\qquad
\subfloat{
\includegraphics[scale=0.34]{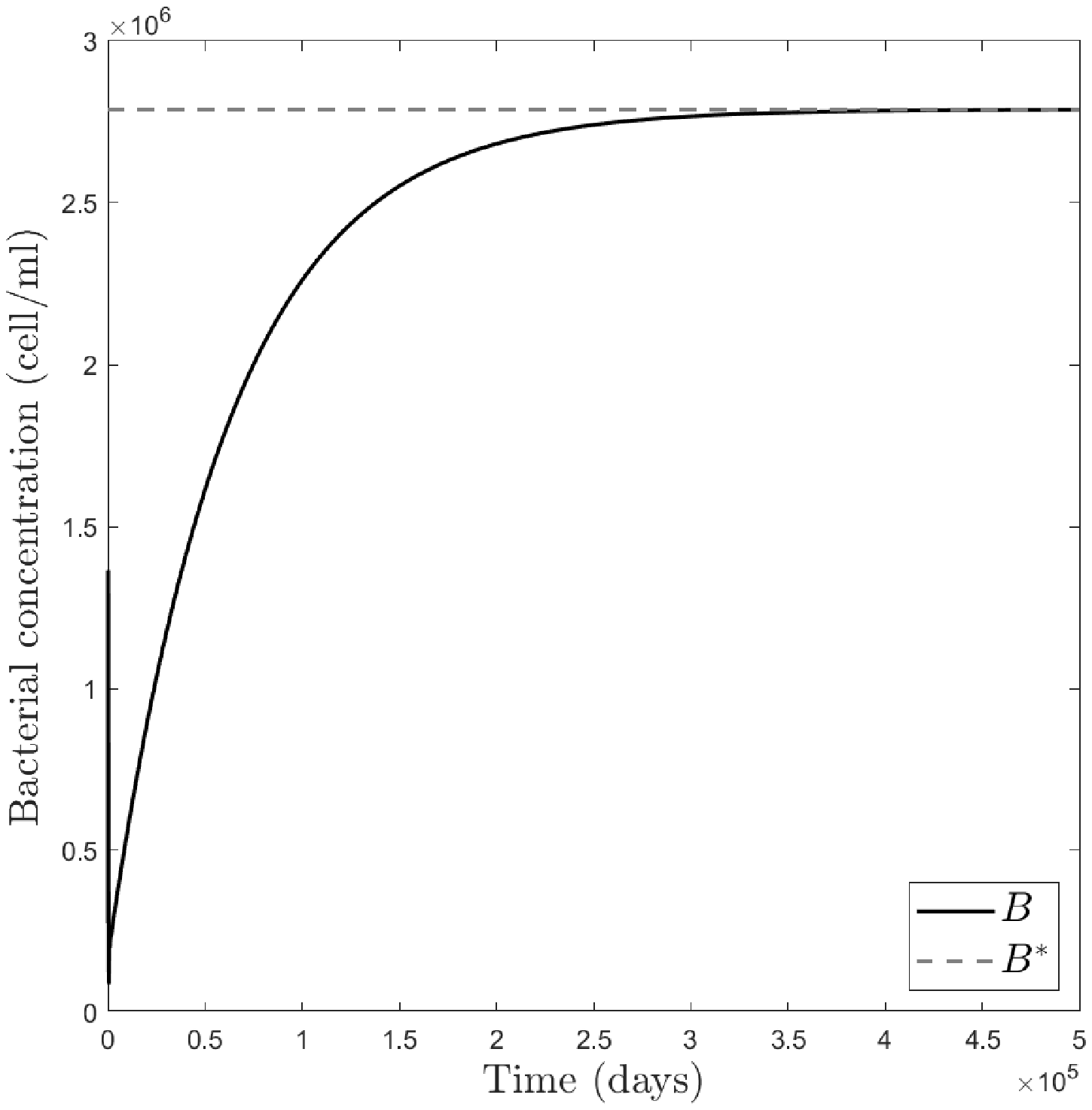}}
\end{center}
\begin{center}
\begin{minipage}{0.7\textwidth}
\caption{\small State trajectories of model~\eqref{ModeloColera} 
versus the endemic equilibrium \eqref{EE:NSimulation} 
for $T=5\times10^5$ days and considering all the other values 
of Table~\ref{Tab_Parameter}.}
\label{Fig:EE:stab}
\end{minipage}
\end{center}
\end{figure}

\section{Conclusions}
\label{sec:conc}

In this paper, we improved the mathematical model proposed in \cite{LPST} 
by incorporating the requirement that a healthy individual must intake 
bacteria from the environment to become infected and, by doing so, 
these bacteria are removed from the aquatic medium. 
In contrast to \cite{LPST}, the feasibility of the endemic 
equilibrium depends on the rate at which the bacteria are spread 
by the infective, and must exceed the combined rates 
at which infective
leave their compartment, i.e., must be larger than the sum of the 
rates at which individuals are quarantined, and die either naturally 
or because of the disease. The conditions for the local stability 
of the endemic equilibrium also differ from the ones of \cite{LPST}. 

We proposed and analyzed an optimal control problem, where the control 
function represents the fraction of susceptible population who receive 
chlorine water tablets (CWT) for water purification, 
with the objective to minimize the number of infective
individuals as well as the cost associated with the distribution of CWT. The optimal 
solution has been characterized both analytical and numerically. 
The extremal bang-bang controls satisfy the so-called strict bang-bang 
property with respect to the Pontryagin Maximum Principle. 
Thus, the proposed strategies for the distribution of CWT
represent suitable means for containing cholera outbreaks,
in different scenarios and periods of time. This is supported
by the current situation in Mozambique, where the Portuguese army 
purifies around $4\ 000$ liters of water per day using chlorine, to fight
the cholera epidemic caused by the passage of cyclone Idai in March 2019 \cite{moz}.

In our work, we assume a homogeneously mixing population and the distribution of CWT 
to susceptible individuals is done randomly. 
As future work, it would be interesting to propose a model defined by 
partial differential equations in order to consider a temporal 
and spatial distribution of CWT.
Studying a model that incorporates a spatial distribution, 
we could decide to distribute CWT only to susceptible individuals who live in areas 
that can be more easily reached by health workers.
Moreover, the proposed model could be generalized by considering seasonality 
(see, e.g., \cite{Buonomo,Pascual2,Pascual, Pourabbas} and references cited therein).
Another line of research consists to find how the optimal control and its results are influenced
by the existing uncertainties on the parameters of the model. 
That would be an important message for health
authorities and will be addressed elsewhere.

 
\begin{acknowledgements}
This research was supported by the
Portuguese Foundation for Science and Technology (FCT)
within projects UIDB/04106/2020 and UIDP/04106/2020 (CIDMA) 
and PTDC/EEI-AUT/2933/2014 (TOCCATA), funded by Project 
3599 -- Promover a Produ\c{c}\~ao Cient\'{\i}fica e Desenvolvimento
Tecnol\'ogico e a Constitui\c{c}\~ao de Redes Tem\'aticas 
and FEDER funds through COMPETE 2020, Programa Operacional
Competitividade e Internacionaliza\c{c}\~ao (POCI).
Lemos-Pai\~{a}o is also supported by the Ph.D.
fellowship PD/BD/114184/2016; Silva 
by national funds (OE), through FCT, I.P., in the scope of the 
framework contract foreseen in the numbers 4, 5 and 6 of the
article 23, of the Decree-Law 57/2016, of August 29, 
changed by Law 57/2017, of July 19. \uppercase{T}he 
research of \uppercase{E}zio \uppercase{V}enturino 
has been partially supported by the project
``\uppercase{M}etodi numerici e computazionali per le scienze applicate'' 
of the \uppercase{D}ipartimento di \uppercase{M}atematica
``\uppercase{G}iuseppe \uppercase{P}eano''.
The authors are very grateful to two anonymous referees for several constructive
remarks and questions that helped them to improve the quality of the paper.
\end{acknowledgements}



\end{document}